\newtheorem{theorem}{Theorem}[section]
\newtheorem{lemma}[theorem]{Lemma}
\newtheorem{definition}[theorem]{Definition}
\newtheorem{remark}[theorem]{Remark}
\newtheorem{corollary}[theorem]{Corollary}
\def\N{{\mathbb N}}
\def\R{{\mathbb R}}
\def\E{\mathcal E}
\def\to{\rightarrow}
\def\eps{\varepsilon}
\def\vphi{\varphi}
\def\prob{\mathcal{P}(\R^d)}
\def\ird{\int_{\R^d}}
\def\d{\,\mathrm{d}}
\newcommand{\supp}{\operatorname{supp}}
\newcommand{\measurerestr}{%
  \,\raisebox{-.127ex}{\reflectbox{\rotatebox[origin=br]{-90}{$\lnot$}}}\,%
}
\numberwithin{equation}{section}
\begin{document}
\title{Existence of ground states for\\ aggregation-diffusion equations}
\author{J. A. Carrillo}
\address{Department of Mathematics, Imperial College London, London SW7 2AZ, UK}
\email{carrillo@imperial.ac.uk}
\author{M. G. Delgadino}
\address{Department of Mathematics, Imperial College London, London SW7 2AZ, UK}
\email{m.delgadino@imperial.ac.uk}
\author{F. S. Patacchini}
\address{Deparment of Mathematical Sciences, Carnegie Mellon University, Pittsburgh, PA 15203, USA}
\email{fpatacch@math.cmu.edu}

\date{\today}
\thanks{JAC and MGD were partially supported by EPSRC grant number EP/P031587/1. FSP thanks the CNA at Carnegie Mellon University for their kind support. The authors are very grateful to the Mittag-Leffler Institute for providing a fruitful working environment during the special semester \emph{Interactions between Partial Differential Equations \& Functional Inequalities} for the period September--December 2016.}
\subjclass[2010]{}
\begin{abstract}
We analyze free energy functionals for macroscopic models of multi-agent systems interacting via pairwise attractive forces and localized repulsion. The repulsion at the level of the continuous description is modeled by pressure-related terms in the functional making it energetically favorable to spread, while the attraction is modeled through nonlocal forces. We give conditions on general entropies and interaction potentials for which neither ground states nor local minimizers exist. We show that these results are sharp for homogeneous functionals with entropies leading to degenerate diffusions while they are not sharp for fast diffusions. The particular relevant case of linear diffusion is totally clarified giving a sharp condition on the interaction potential under which the corresponding free energy functional has ground states or not.
\end{abstract}
\maketitle

\section{Introduction}

Given an \emph{interaction potential}, or \emph{kernel}, $W\colon\R^d\to (-\infty,\infty]$, an \emph{entropy}, or \emph{internal density}, \emph{function} $U\colon[0,\infty)\to \R$, and a \emph{temperature} $\eps\geq 0$, we consider the nonlinear evolution of a normalized density $\rho$, given by the equation
\begin{equation}\label{the flow}
	\partial_t \rho = \nabla\cdot\left( (\nabla W*\rho)\rho\right) + \eps \nabla\cdot\left(\nabla U'(\rho)\rho\right), \qquad t>0.
\end{equation}
This work derives conditions on the relationship of the interaction potential $W$ and the entropy function $U$ for the existence and nonexistence of stationary and ground states to \eqref{the flow}. Taking advantage of the fact that \eqref{the flow} is the $2$-Wasserstein gradient flow of the \emph{free energy}
\begin{equation}\label{the energy}
	E_\eps(\rho)=\frac{1}{2}\ird\ird W(x-y)\,d\rho(x)\,d\rho(y) + \eps \ird U(\rho(x))\,dx, 
\end{equation}
where we refer the reader for instance to \cite{Villani03,CaMcCVi03,CaMcCVi06,AGS}, we follow a strategy based on energetic arguments to show our main results. For example, the existence of stationary or ground states is obtained by analyzing suitable conditions for the free energy \eqref{the energy} to admit critical points or global minimizers, respectively. This strategy has already been successfully used to analyze general qualitative properties of local minimizers for zero temperature ($\eps=0$), as in \cite{BCLR2,CDM,CCP,simione2015existence,CFP}. 

The case of linear diffusion, which in \eqref{the flow} translates to $U(\rho)=\rho\log \rho$ and $\nabla\cdot(\nabla U'(\rho)\rho)=\Delta \rho$, is classical in the literature and corresponds to the McKean--Vlasov equation \cite{DG}. In fact, under suitable conditions on $W$, the flow \eqref{the flow} can be seen as the so-called mean-field limit of the following coupled ODE system: consider $N$ particles at positions $\{X_1,...,X_N\}\subset \R^d$ satisfying the coupled equations
\begin{equation}\label{ODE system}
\begin{cases}
\displaystyle\dot{X}_i=-\displaystyle\frac{1}{N}\sum_{j\ne i} \nabla W(X_i-X_j)+\sqrt{2\eps}B_i,\\
{X}_i(0)={X}_i^0\in\R^d,
\end{cases} \quad i=1,\dots, N\,,
\end{equation}
where $\{B_i\}_{i=1}^N$ is a family of $N$ independent Brownian processes; see \cite{szn}. The ODE system \eqref{ODE system} and its mean-field limit \eqref{the flow} are widely used in diverse applications such as granular media \cite{BCP,BCCP,Tos1}, pedestrian models \cite{CPT}, swarming \cite{mogilner1999non,TBL,KCBFL,BCCD,HF}, cell adhesion \cite{PBSG}, chemotaxis cell motility \cite{DoPe04,BlaDoPe06,BCC12}, and opinion dynamics \cite{GPY,MT,APTZ}, to name only a few. The case of homogeneous nonlinear diffusion, corresponding to $U(\rho)=\frac1{m-1}\rho^m$ and $\nabla\cdot(\nabla U'(\rho)\rho)=\Delta \rho^m$ with $m>1$, has also received a lot of attention, in particular thanks to variants of the classical Keller--Segel model for chemotaxis \cite{TBL,BCL,CCV,CHVY,calvez2017equilibria,CCHCetraro} and to swarming aggregation models taking into account size or volume constraints for the individual particles \cite{Ol,BCM,BD,FHK}.

The first main result of this paper (Theorem \ref{thm:existence-global}) gives, for interaction potentials which are bounded from below and linear diffusion, a sharp condition related to the noise strength $\eps$ allowing for the existence or not of global minimizers of the corresponding free energy \eqref{the energy}; see Section 6. It is well-known that for homogeneous kernels with linear diffusion, the criticality corresponds to the logarithmic kernel in any dimension \cite{BCC08,calvez2017equilibria}. In particular, this is the case of the celebrated classical Keller--Segel model in two dimensions \cite{DoPe04} that enjoys the critical mass phenomena. Our result is the natural counterpart for interaction potentials which are bounded from below giving the sharp constant for this dichotomy in terms of the noise strength $\eps$. More precisely, if 
$$
 \lim_{|x|\to\infty} \nabla W(x) \cdot x =L>0
$$
for some constant $L$, then there exists a critical diffusion $\eps_\mathrm{c}=L/(2d)$ separating the existence of ground states from the unboundeness from below of the free energy. Notice that this condition is satisfied for interaction potentials behaving logarithmically at infinity. This result is not covered by the concentration-compactness principle of Lions' \cite{lions1984concentration}. 

A related problem with linear diffusion concerns the question of how stable the (local) minimizers with zero diffusion $\eps=0$ are when small noise $\eps$ is switched on. This question is very much related to metastability phenomena observed in numerical simulations \cite{EK,GPY,BDZ,BCDPZ}. For instance, it is shown in \cite{CCP,simione2015existence} that for compactly supported, not $H$-stable interaction potentials, the energy \eqref{the energy} with $\eps=0$ has global minimizers given by compactly supported probability measures. We refer the reader to \cite{CCP,simione2015existence} for further considerations on $H$-stability and on the existence/nonexistence of global minimizers without noise. Typical examples include Morse and repulsive-attractive power-law potentials \cite{CHM}, as well as compactly supported repulsive-attractive potentials \cite{BDZ,GPY}. Our second main result (Theorem \ref{thm:nonexistence-local}) gives a negative answer for repulsive-attractive interaction potentials which are smooth enough showing that no critical points or stationary states of the energy \eqref{the energy} exist as soon as the linear diffusion is triggered with $\eps>0$, no matter how small the noise strength $\eps$ is. Note that this is also related to the question of finding sharp conditions for the (non)existence of steady states for kinetic systems such as the Vlasov--Poisson--Fokker--Planck system \cite{BoDo,Dolbeault}. Our strategy is based on an argument by contradiction using the nonlinear integral equation satisfied by the steady states obtained from the Euler--Lagrange conditions for the critical points; see Section 3. We conjecture that this is one of the reasons behind the metastability observed in numerical simulations \cite{EK,GPY,BDZ,BCDPZ} in this context. Note that our result asserts that no stationary state of \eqref{the flow} exists for $\eps>0$ in the whole space $\R^d$; however, for bounded domains with no-flux boundary conditions, ground states exist by compactness and lower semicontinuity of the energy. Nevertheless, the larger the domain the flatter the stationary state becomes; see Remark~\ref{rem bounded domain}. Hence, for large domains no stationary state resembles the global minimizer of \eqref{the energy} with $\eps=0$.

The third main result (Theorem \ref{thm:nonexistence-global}) contains a sufficient condition on general interaction potentials and nonlinear diffusions for the unboundeness from below of the free energy \eqref{the energy}. This result is again sharp for homogeneous diffusions $U(\rho)=\rho^m$, with $m\ge 1$, recovering previous results in \cite{BCL,calvez2017equilibria}. Moreover, by means of an explicit counterexample, we see that these conditions are not sharp for homogeneous diffusions $U(\rho)=\rho^m$, with $m\le 1$; see Section 4. Section 5 discusses the sharpness of this condition in terms of existence of global minimizers for $U(\rho)=\rho^m$, with $m> 1$, and homogeneous kernels. Although the existence of global minimizers is a consequence of Lions' concentration-compactness principle \cite{simione2015existence}, we give here an elementary different proof exploiting extra compactness properties stemming from radial decreasing rearrangements. 

In Section 2 we first collect some preliminary material necessary to the proper treatment of the free energy \eqref{the energy} in subsets of probability measures.

\section{Preliminaries}
	In this section we introduce the notation, definitions and preliminary results used throughout.
\subsection{Measures}
	We write $\prob$ the set of Borel probabilty measures and $\mathcal{P}_\mathrm{ac}(\R^d)$ the subset of $\prob$ of measures which are absolutely continuous with respect to the $d$-dimensional Lebesgue measure. Given $p\in\N$, we write $\mathcal{P}_p(\R^d)$ the subset of $\prob$ of measures with finite $p$th moment. The \emph{$p$th Wasserstein distance} $d_p(\mu,\nu)$ between two probability measures $\mu$ and $\nu$ belonging to $\mathcal{P}_p(\R^d)$ is
\begin{equation*}
	d_p(\mu,\nu) = \min_{\pi \in \Pi(\mu,\nu)} \left( \int_{\R^d\times \R^d} |x-y|^p d \pi(x,y) \right)^{1/p} ,
\end{equation*}
where $\Pi(\mu,\nu)$ is the set of tranport plans between $\mu$ and $\nu$; i.e., $\Pi(\mu,\nu)$ is the subset of $\prob \times \prob$ of measures with $\mu$ as first marginal and $\nu$ as second marginal. We also define the \emph{$\infty$-Wasserstein distance} $d_\infty(\mu,\nu)$, whenever $\mu$ and $\nu$ are compacty supported, by
\begin{equation*}
	d_\infty(\mu,\nu) = \inf_{\pi\in\Pi(\mu,\nu)} \sup_{(x,y) \in \supp\pi} |y-x|,
\end{equation*}
where the $\supp$ denotes the support.

In this paper we work with the weak-$^*$ topology of measures. We say that a sequence $(\mu_n)_{n\in\N}$ of measures in $\mathcal{M}(\R^d)$, the set of Radon measures, converges \emph{weakly-$^*$} to a measure $\mu_\infty \in \mathcal{M}(\R^d)$ if
\begin{equation*}
	\ird f(x) \,d\mu_n(x) \to \ird f(x) \,d\mu_\infty(x) \quad \mbox{as $n\to \infty$ for all $f\in C_\mathrm{c}(\R^d)$,}
\end{equation*}
where $C_\mathrm{c}(\R^d)$ is the space of compactly supported continuous functions defined on $\R^d$.

Since $\mathcal{M}(\R^d)$ is the dual space of $C_\mathrm{c}(\R^d)$, the Banach--Alaoglu theorem applied to measures tells us that the closed unit ball in $\mathcal{M}(\R^d)$ in the weak-* topology is weakly-$^*$ compact. If we now restrict to $\prob$ and to the set $\mathcal{M}_+(\R^d)$ of nonnegative Radon measures, then we get the following compactness \cite[Chapter 3]{Brezis}:
\begin{theorem}[compactness of weak-$^*$ topology]\label{weakcompact}
	Let $(\mu_n)_{n\in\N}$ be a sequence in $\prob$. There exists a subsequence of $(\mu_n)_{n\in\N}$ which converges weakly-$^*$ to some $\mu_\infty\in \mathcal{M}_+(\R^d)$. 
\end{theorem}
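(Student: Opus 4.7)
The plan is to deduce sequential weak-$^*$ compactness from the Banach--Alaoglu theorem mentioned just before the statement, combined with the separability of $C_\mathrm{c}(\R^d)$ and the Riesz--Markov--Kakutani representation theorem. Since every $\mu_n\in\prob$ has total variation equal to $1$, the sequence lies entirely inside the closed unit ball of the dual of $C_\mathrm{c}(\R^d)$. Abstract Banach--Alaoglu already tells us this ball is weak-$^*$ compact, but compactness alone does not guarantee a convergent \emph{sequence}: one must metrize the ball, which is where separability enters.

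Concretely, I would fix a countable dense subset $\{f_k\}_{k\in\N}\subset C_\mathrm{c}(\R^d)$ (or equivalently of $C_0(\R^d)$, which is separable). For each $k$ the scalar sequence $\bigl(\ird f_k\,d\mu_n\bigr)_{n\in\N}$ is bounded by $\|f_k\|_\infty$, so a Cantor diagonal extraction yields a subsequence $(\mu_{n_j})_{j\in\N}$ along which $\ird f_k\,d\mu_{n_j}$ converges as $j\to\infty$ for every $k$. A standard three-$\eps$ argument, using the density of $\{f_k\}$ and the uniform bound $|\ird f\,d\mu_{n_j}|\leq\|f\|_\infty$, then upgrades this to convergence of $\ird f\,d\mu_{n_j}$ for every $f\in C_\mathrm{c}(\R^d)$.

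Call $\Lambda(f):=\lim_{j\to\infty}\ird f\,d\mu_{n_j}$. This $\Lambda$ is linear on $C_\mathrm{c}(\R^d)$ and satisfies $|\Lambda(f)|\leq\|f\|_\infty$, so it is bounded. Crucially, if $f\geq 0$ then $\ird f\,d\mu_{n_j}\geq 0$ for every $j$ (because each $\mu_{n_j}$ is a positive measure), hence $\Lambda(f)\geq 0$. By the Riesz--Markov--Kakutani representation theorem applied to this positive linear functional, there exists a unique $\mu_\infty\in\mathcal{M}_+(\R^d)$ with $\Lambda(f)=\ird f\,d\mu_\infty$ for all $f\in C_\mathrm{c}(\R^d)$, which is exactly the desired weak-$^*$ convergence $\mu_{n_j}\to\mu_\infty$.

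I expect the main obstacle to be the bookkeeping in the first two paragraphs: Banach--Alaoglu in its pure form produces convergent nets, not convergent subsequences, and one has to be careful about the distinction between $C_\mathrm{c}(\R^d)$ (which is not a Banach space) and $C_0(\R^d)$ (which is separable) when invoking separability. The diagonal-extraction workaround circumvents this cleanly. One should also explicitly note, although the statement does not require it, that in general $\mu_\infty(\R^d)\leq 1$ with possibly strict inequality: mass may escape to infinity, which is why the limit lives in $\mathcal{M}_+(\R^d)$ rather than in $\prob$ itself, and why a genuine statement of weak-$^*$ compactness \emph{inside} $\prob$ would need an additional tightness hypothesis in the spirit of Prokhorov's theorem.
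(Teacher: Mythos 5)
Your proof is correct, and it follows the same route the paper intends: the paper states this result without proof, simply invoking Banach--Alaoglu for $\mathcal{M}(\R^d)$ as the dual of $C_\mathrm{c}(\R^d)$ and citing Brezis, Chapter 3. Your diagonal extraction over a countable dense subset of $C_0(\R^d)$, the three-$\eps$ upgrade, and the Riesz--Markov representation of the positive limit functional are exactly the standard details behind that citation, including the correct observation that positivity passes to the limit while total mass may drop below $1$, which is why $\mu_\infty$ is only asserted to lie in $\mathcal{M}_+(\R^d)$.
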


We recall the definition of tightness and Prokhorov's theorem; see for example \cite{Bil}.

\begin{definition}[tight family of measures]
	We say that a family $\{\mu_n\}_{n\in\N} \subset \mathcal{M}(\R^d)$ is \emph{tight} if for every $\delta > 0$ there exists a compact set $K_\delta \subset \R^d$ such that
\begin{equation*}
	|\mu_n(K_\delta^\mathrm{c})| \leq \delta, \quad \mbox{uniformly in $n$,}
\end{equation*}
where $K_\delta^\mathrm{c}$ is the complementary set of $K_\delta$.
\end{definition}
\begin{theorem}[Prokhorov's theorem]\label{prok}
	A family $\{\rho_n\}_{n\in\N} \subset \prob$ is tight if and only if it is weakly-$^*$ relatively compact in $\prob$.
\end{theorem}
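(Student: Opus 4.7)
The statement is an ``if and only if,'' so the natural approach is to tackle the two implications separately, using Theorem~\ref{weakcompact} to supply the bulk of the compactness in the nontrivial direction and arguing by contradiction in the other.

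For the forward direction (tight $\Rightarrow$ weakly-$^*$ relatively compact), I would start from an arbitrary sequence $(\rho_n)_{n\in\N}$ extracted from the tight family and apply Theorem~\ref{weakcompact} directly to obtain a subsequence, still denoted $(\rho_n)$, converging weakly-$^*$ to some $\mu_\infty \in \mathcal{M}_+(\R^d)$. The only thing to verify is that no mass has escaped, i.e.\ $\mu_\infty(\R^d)=1$. For the upper bound $\mu_\infty(\R^d)\le 1$, I would test against any $\psi\in C_\mathrm{c}(\R^d)$ with $0\le \psi\le 1$ and pass to the limit to obtain $\ird \psi\,d\mu_\infty \le 1$; taking the supremum over such $\psi$ and invoking inner regularity of Radon measures yields $\mu_\infty(\R^d)\le 1$. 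For the lower bound, given any $\delta>0$ I would pick the compact $K_\delta$ from the definition of tightness and choose a cutoff $\varphi_\delta\in C_\mathrm{c}(\R^d)$ satisfying $0\le \varphi_\delta\le 1$ and $\varphi_\delta \equiv 1$ on $K_\delta$, so that $\ird \varphi_\delta\,d\rho_n \ge \rho_n(K_\delta) \ge 1-\delta$; passing to the limit gives $\mu_\infty(\R^d)\ge 1-\delta$, and letting $\delta\to 0$ concludes.

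For the reverse direction (weakly-$^*$ relatively compact $\Rightarrow$ tight), I would argue by contradiction. If $(\rho_n)$ is not tight, there exist $\delta>0$ and a subsequence (re-indexed as $\rho_n$) such that $\rho_n(B_n^\mathrm{c})>\delta$, where $B_n$ denotes the closed ball of radius $n$ centered at the origin. By the assumed relative compactness in $\prob$, I extract a further subsequence converging weakly-$^*$ to some $\mu_\infty\in\prob$. Since $\mu_\infty$ itself is a single probability measure on $\R^d$, it is automatically tight, so I can fix $R>0$ with $\mu_\infty(B_R^\mathrm{c})<\delta/4$ and pick $\varphi\in C_\mathrm{c}(\R^d)$ with $0\le \varphi\le 1$, $\varphi\equiv 1$ on $B_R$. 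Then $\ird \varphi\,d\mu_\infty \ge 1-\delta/4$, so by weak-$^*$ convergence $\ird\varphi\,d\rho_n > 1-\delta/2$ for $n$ large. Choosing $n$ large enough that $\supp\varphi\subset B_n$ gives $\ird\varphi\,d\rho_n\le \rho_n(B_n) < 1-\delta$, which contradicts the previous inequality as soon as $\delta/2<\delta$.

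The main obstacle, although minor, is the careful handling of the two endpoints: on the one hand ensuring that no mass leaks out in the forward direction (which requires both the lower bound via cutoffs and the upper bound via inner regularity), and on the other hand constructing the right cutoff in the reverse direction so that the contradiction between the escaping mass of $\rho_n$ and the concentration of $\mu_\infty$ actually bites. Everything else reduces to the standard interplay between $C_\mathrm{c}$ testing and compact-set approximation that is built into the weak-$^*$ topology.
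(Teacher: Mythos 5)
Your argument is correct, but note that the paper does not prove this statement at all: Prokhorov's theorem is recalled as a classical result with a citation to the literature (Billingsley), and only its consequences are used later (e.g.\ in Lemma~\ref{lem tightness through the interaction energy} and Lemma~\ref{lem:alphamomentsinequality}). What you supply is the standard proof in the Euclidean setting, and it fits the paper's framework well: the forward direction leans on Theorem~\ref{weakcompact} (Banach--Alaoglu for measures) and then rules out mass escape by testing with cutoffs equal to $1$ on the compact sets $K_\delta$ from tightness, while the converse is the usual mass-leakage contradiction. Two small points deserve care if this were written out. First, in the reverse direction, the negation of tightness only gives, for each radius $m$, \emph{some} index $n_m$ with $\rho_{n_m}(B_m^{\mathrm{c}})>\delta$; to get your re-indexed subsequence with radii tending to infinity you should observe that each single probability measure is tight, so $\rho_{n}(B_m^{\mathrm{c}})\to 0$ as $m\to\infty$ for fixed $n$, forcing the indices $n_m$ to be unbounded and allowing extraction of a strictly increasing subsequence. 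Second, in the forward direction the upper bound $\mu_\infty(\R^d)\le 1$ already comes for free from Theorem~\ref{weakcompact} (the limit lies in the closed unit ball of $\mathcal{M}_+(\R^d)$), so the inner-regularity step is harmless but redundant; the genuinely needed half is the lower bound via the tightness cutoffs, which you handle correctly.
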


Given a map $T \colon \R^d \to \R^d$ and a probability measure $\rho$, we write $T\#\rho$ the \emph{push-forward measure} of $\mu$ through $T$; i.e., $T\#\rho$ is the probability measure such that, for any measurable function $\vphi\colon \R^d \to [-\infty,\infty]$ with $\vphi\circ T$ integrable, we have
\begin{equation*}
	\ird \vphi(x) \,dT\#\rho(x) = \ird \vphi(T(x)) \d \rho(x).
\end{equation*}


\subsection{Definition of the energy}\label{subsec:defn-energy}
We introduce the family of free energy functionals defined on the set of probability measures $E_\eps\colon \prob \to (-\infty,\infty]$, indexed by $\eps>0$, given for all $\rho \in \prob$ by $E_\eps(\rho) = \mathcal{W}(\rho)+\eps\mathcal{E}_U(\rho)$,
where the interaction energy $\mathcal{W}$ and the entropy $\E_U$ are defined by
\begin{equation*}
	\mathcal{W}(\rho) = \frac12 \ird \ird W(x-y)\,d\rho(x)\,d\rho(y)\qquad \mbox{and} \qquad \mathcal{E}_U(\rho) = \ird U(\rho_\mathrm{ac}(x)) \,d x + \rho_\mathrm{s}(\R^d) U_\mathrm{s}.
\end{equation*}
Here, $W$ is the interaction potential, or kernel, and $U$ is the entropy, or internal density, function. The measures $\rho_\mathrm{ac}$ and $\rho_\mathrm{s}$ are the absolutely continuous and singular parts of $\rho$ in the unique Lebesgue decomposition $\rho=\rho_{ac}+\rho_\mathrm{s}$, and $U_\mathrm{s}$ is defined as $U_\mathrm{s} = \limsup_{r\to\infty} U(r)/r$; see \cite[Definition 2.32]{AFP} for a link to the recession function. By convention, when $U_\mathrm{s}=+\infty$, or equivalently, $U$ has superlinear growth at infinity, and $\rho_\mathrm{s}(\R^d)=0$ we set $\rho_\mathrm{s}(\R^d)U_\mathrm{s}=0$. 
Observe that when $U_\mathrm{s}=+\infty$ we get
\begin{equation*}
	\E_U(\rho) = \begin{cases} \ird U(\rho(x)) \,d x &\mbox{for all $\rho\in \mathcal{P}_\mathrm{ac}(\R^d)$},\\ +\infty & \mbox{for all $\rho \in \prob\setminus \mathcal{P}_\mathrm{ac}(\R^d)$}.  \end{cases}
\end{equation*}

Although further hypotheses may be considered in various places below, we shall most of the time, sometimes implicitly, assume that $W\colon\R^d\to (-\infty,\infty]$ is locally integrable (i.e., is in $L^1_\mathrm{loc}(\R^d)$), lower semicontinuous and symmetric (i.e., $W(x) = W(-x)$ for all $x\in\R^d$), and that $U\colon [0,\infty) \to \R$ is continuous, of class $C^2$ on $(0,\infty)$, convex, and satisfying $U(0)=0$. We refer to this set of hypotheses as {\bf (H)}. Note that, without loss of generality, we can simply assume that $W$ is positive rather than bounded from below by a real constant.

We assume that $W$ is symmetric without loss of generality, since otherwise one could symmetrize the interaction potential and the question about minimizers or critical points of these functionals, as defined below, would remain unchanged.  We shall say that the interaction potential $W$ is \emph{differentiable away from the origin} if $W$ is of class $C^1$ everywhere but $0$. 

Let us emphasize that the basic assumptions {\bf (H)} together with boundedness from below of the interaction potential $W$ ensure that the free energy functional \eqref{the energy} is well-defined on the set of probability measures. Indeed, notice first that the weak-$^*$ lower semicontinuity of the functional $\mathcal{E}_U$ is equivalent to the lower semicontinuity and convexity of $U$; see \cite[Theorem 2.34]{AFP}. Moreover, because $W$ is lower semicontinuous and bounded from below, we obtain by \cite[Proposition 7.2]{santambrogio2015optimal} that $\mathcal{W}$ is weak-$^*$ lower semicontinuous. Therefore, we obtain that, given $\{\rho_n\}_{n\in\N} \subset C_\mathrm{c}^\infty(\R^d)\cap\mathcal{P}(\R^d)$ and $\rho\in C_\mathrm{c}^\infty(\R^d)\cap\mathcal{P}(\R^d)$ such that $\rho_n\rightharpoonup\rho$, the assumption {\bf (H)} yields
\begin{equation*}
E_\eps(\rho)\le \liminf_{n\to\infty} E_\eps(\rho_n). 
\end{equation*}
Now, we extend $E_\eps$ to all of $\mathcal{P}(\R^d)$ by lower semicontinuity. Given $\rho\in\mathcal{P}(\R^d)$, we define
\begin{equation*}
E_\eps(\rho)=\inf_{\substack{\{\rho_n\}_{n\in\N}\subset C_\mathrm{c}^\infty(\R^d)\cap\mathcal{P}(\R^d)\\ \mathrm{s.t.}\;\rho_n\rightharpoonup\rho}}\liminf_{n\to\infty} E_\eps(\rho_n).
\end{equation*}
In addition, regardless of the boundedness from below of $W$, the assumptions {\bf (H)} make sure that the energy is always well-defined when restricted to characteristic functions of balls. We shall always make sure in the following to be in either of these two well-defined cases.

Each entropy function $U$ is associated its \emph{McCann scaling function} $u\colon (0,\infty) \to \R$, which is defined by
\begin{equation*}
	u(r)=r^dU(r^{-d}) \quad \mbox{for all $r\in (0,\infty)$}.
\end{equation*}
As proven by McCann \cite{McCann97}, convexity in the $2$-Wasserstein sense of the associated entropy is equivalent to $u$ being nonincreasing and convex. Observe that $u$ being nonincreasing and $U$ convex are equivalent to say that the formal $2$-Wasserstein gradient flow is a nonlinear diffusion equation of the form $\partial_t \rho=\Delta P(\rho)$ with $P$ nonnegative and nondecreasing respectively since $P(r)=rU'(r)-U(r)$ and $P'(r)=rU''(r)$. These conditions on $U$ intuitively mean that the functional is modelling a localized repulsive effect for $\rho$. We say that $U$ does \emph{not} model \emph{slow diffusion} if $\lim_{r\to0} U'(r)=-\infty$. Notice that this includes the subcase of linear diffusion $P(\rho)=\rho$, or equivalently, $U(\rho)=\rho\log \rho$, and the subcase of \emph{fast diffusion} corresponding to $\lim_{r\to0} rU''(r)=-\infty$. Note that McCann's scaling function captures how the entropy functional changes over dilations of the normalized characteristic function $\chi_{B_r}$ of the open Euclidean ball $B_r$ of radius $r\ge 0$ centered at the origin, or in other words, over rays in the Wassertein metric spaces $(\mathcal{P}_p(\R^d), d_p)$, $1\leq p \leq \infty$. More precisely, it is easy to check that
\begin{align*}
\E_U(\rho_r)=\E_U(r^{-d}\omega_d^{-1}\chi_{ B_{r}}) = \int_{B_r}U(r^{-d}\omega_d^{-1})\;dx= r^d\omega_d U(r^{-d}\omega_d^{-1})=u(r\omega_d^{1/d}),
\end{align*}
where $\rho_r=T_r \# (\omega_d^{-1}\chi_{B_1})= r^{-d}\omega_d^{-1}\chi_{ B_{r}}$ with $T_r(x)=rx$, and $\omega_d$ is the volume of the $d$-dimensional unit ball. Often we shall consider the derivative of the entropy functional under dilations, and for that purpose we consider the related \emph{scaling function} $v\colon(0,\infty) \to \R$ given by 
\begin{equation*}
	v(r)=-ru'(r) \quad \mbox{for all $r\in (0,\infty)$}.
\end{equation*}

The model cases for the diffusion are given by the power (\emph{nonlinear}) function
\begin{equation*}
	U(r) = \frac{r^m}{m-1}, \quad m\neq1, \qquad \mbox{for all $r\in [0,\infty)$},
\end{equation*}
in which case we shall write $\E_m$ instead of $\E_U$, and by the logarithmic function (which we refer to as the \emph{linear} case $m=1$)
\begin{equation*}
	U(r) = r\log r \quad \mbox{for all $r\in [0,\infty)$},
\end{equation*}
in which case we prefer the notation $\E$ over $\E_1$ for the entropy. In these typical models the associated scaling functions are given for all $r\in(0,\infty)$ by
\begin{equation*}
	u(r)=\frac{r^{(1-m)d}}{m-1} \qquad \mbox{and} \qquad v(r)=dr^{(1-m)d}
\end{equation*}
for $m\ne 1$, and by
\begin{equation*}
	u(r)=-d\log(r) \qquad \mbox{and} \qquad v(r)=d
\end{equation*}
 for $m=1$.
 
The model case for attractive interaction potentials is given by power laws. For a given $\beta>-d$ we write $W_\beta$ in place of $W$ for the interaction potential defined by
\begin{equation*}
W_\beta(x) = \begin{cases} \frac{|x|^\beta}{\beta}&\mbox{if $\beta\ne0$},\\ \log |x|&\mbox{if $\beta=0$}. \end{cases}
\end{equation*}
The resulting interaction energy in this case is denoted $\mathcal{W}_\beta$. Note that for $-d<\beta\leq 0$ one needs to restrict the functional to a set of suitable densities for the energies $\mathcal{W}_\beta$ and $\mathcal{E}_m+\mathcal{W}_\beta$ to be well-defined. For instance, these energies are always well-defined for compactly supported bounded functions. We will specify the precise definition of the domain of the energies when needed. As we shall see, there is a direct relationship between strength of attractivity in the interaction energy $\mathcal{W}_\beta$ and repulsivity in the entropy $\mathcal{E}_m$. In this paper we study in detail the criticality that happens at $\beta=0$ and $m=1$; we give sharp conditions for the existence of global minimizers in the linear diffusion regime (Theorem~\ref{thm:existence-global}). 

\subsection{Critical points, local minimizers, and Euler--Lagrange conditions}
We say that $\rho\in \mathcal{P}(\R^d)$ is a \emph{critical point} of ${E}_\eps$ if ${E}_\eps(\rho)<\infty$ and if it satisfies that $\frac{\delta {E}_\eps}{\delta \rho} :=\eps U'(\rho)+W*\rho$ be equal to a constant, possibly different in each closed connected component of the support of $\rho$. Small variants of the results in \cite{BCL,BCLR2,CDM,CCV,calvez2017equilibria} imply that local minimizers  of ${E}_\eps$ with respect to $d_p$ for any $1\leq p\leq \infty$ are critical points of ${E}_\eps$. 

For the lack of a precise reference, we derive here the Euler--Lagrange conditions for $d_\infty$-local minimizers. Given $r>0$, we say that $\rho$ is a \emph{$d_\infty$-local minimizer with radius $r$} if $E_\eps(\rho)\le E_\eps(\nu)$ for any $\nu\in\mathcal{P}(\R^d)$ such that $d_\infty(\rho,\nu)<r$. Note that this definition holds analogously for $d_p$-local minimizers for any $p\in[1,\infty)$. We show that if $\rho$ is a $d_\infty$-local minimizer with radius $r$, then it satisfies that for each closed connected component $A_i$ of its support there exists  $C_i\in \R$ such that 
\begin{equation}\label{firstorderconditionsprelim}
\begin{cases}
\eps U'(\rho)+W*\rho=C_i\qquad\mbox{almost everywhere on $A_i$},\\
\eps U'(\rho)+W*\rho\ge C_i \qquad\mbox{almost everywhere on $A_i+B_{r}$.}
\end{cases}
\end{equation}
Indeed, because $\rho$ is a $d_\infty$-local minimizer, we obtain that if $d_\infty(\nu,\rho)\le r$, then
\begin{equation*}
\left.\frac{d}{dt}E_\eps((1-t)\rho+t\nu)\right|_{t=0}\ge0,
\end{equation*}
which implies
\begin{equation}\label{E-L differential}
\ird (\eps U'(\rho)+W*\rho)\;d\nu\ge\ird (\eps U'(\rho)+W*\rho)\;d\rho. 
\end{equation}
We take $x_0\in\supp\rho$ and $\phi\in C^\infty_\mathrm{c}(B_r(x_0))$, where $B_r(x_0)$ stands for the open ball of radius $r$ and center $x_0$. For $\delta<\|\phi\|_{L^\infty(\R^d)}/2$, we consider the probability measure 
$$
\nu=\rho\measurerestr B_r^\mathrm{c}(x_0)+\left(1+\eps \left(\phi-\frac{1}{\rho(B_r(x_0))}\int_{B_r(x_0)}\phi \;d\rho\right)\right)\rho\measurerestr B_r(x_0),
$$
where $\rho\measurerestr B_r(x_0)$ denotes the restriction of $\rho$ to the ball $B_r(x_0)$ and $\rho\measurerestr B_r^\mathrm{c}(x_0)$ the restriction to its complement. Because $\nu$ results only from perturbing $\rho$ inside $B_r(x_0)$, it is clear that $d_\infty(\nu,\rho)<r$. For this particular $\nu$, \eqref{E-L differential} can be rewritten as
\begin{equation*}
\int_{B_r(x_0)} (\eps U'(\rho)+W*\rho)\phi \,d\rho\ge\left(\frac{1}{\rho(B_r(x_0))} \int_{B_r(x_0)}\phi\, d\rho
\right)\int_{B_r(x_0)} (\eps U'(\rho)+W*\rho)\,d\rho.
\end{equation*}
By taking $-\phi$ instead of $\phi$, we get that, for any $\phi\in C^\infty_\mathrm{c}(B_r(x_0))$,
\begin{equation*}
\int_{B_r(x_0)} (\eps U'(\rho)+W*\rho)\phi \,d\rho=\left(\frac{1}{\rho(B_r(x_0))} \int_{B_r(x_0)}\phi\, d\rho
\right)\int_{B_r(x_0)} (\eps U'(\rho)+W*\rho)\,d\rho,
\end{equation*}
which implies that, almost everywhere in $B_r(x_0)$,
\begin{equation*}
\eps U'(\rho)+W*\rho=\frac{1}{\rho(B_r(x_0))}\int_{B_r(x_0)} (\eps U'(\rho)+W*\rho)\;d\rho.
\end{equation*}
Hence, $\eps U'(\rho)+W*\rho$ is almost everywhere locally constant in each connected component of the support of $\rho$. Which shows the first condition in \eqref{firstorderconditionsprelim}. Next, we consider $\psi\in C^\infty_\mathrm{c}(B_r(x_0))$ positive. For $\delta<1/\ird \psi\,dx$ we now take the probability measure 
\begin{equation*}
	\nu=\rho+\delta\left(\psi\rho( B_r(x_0))-\left(\int_{B_r(x_0)}\psi \,dx\right)\rho\measurerestr B_r(x_0)\right),
\end{equation*}
which again satisfies $d_\infty(\nu,\rho)<r$. For this particular $\nu$, \eqref{E-L differential} can be rewritten as
\begin{equation*}
\rho( B_r(x_0))\int_{B_r(x_0)} (\eps U'(\rho)+W*\rho)\psi\, dx\ge\left(\int_{B_r(x_0)}\psi \,dx
\right)\int_{B_r(x_0)} (\eps U'(\rho)+W*\rho)\;d\rho.
\end{equation*}
The previous inequality holds for any $\psi\in C^\infty_\mathrm{c}(B_r(x_0))$ positive, which, combined with the first condition in \eqref{firstorderconditionsprelim}, implies the second inequality in \eqref{firstorderconditionsprelim}.


\subsection{Radial rearrangements}
We recall here Riesz's rearrangement inequality and its consequence on our energy $E_\eps$; see \cite[Chapter 3]{LieLo01}.
\begin{theorem}[Riesz's rearrangement inequality]
	Let $g,h\colon \R^d \to [0,\infty)$ be two nonnegative functions and $f\colon\R^d \to (-\infty,0]$ be a nonpositive function. Then 
\begin{equation*}
	\ird \ird f^*(x-y) g^*(y) h^*(x) \,dx\,dy \leq \ird \ird f(x-y) g(y) h(x) \,dx\,dy,
\end{equation*}
where $f^*$, $g^*$ and $h^*$ are the radially symmetric decreasing rearrangements of $f$, $g$ and $h$, respectively.
\end{theorem}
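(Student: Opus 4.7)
The statement is the classical Riesz rearrangement inequality in a form phrased with one nonpositive function, so the plan is to reduce it to the standard trilinear inequality for nonnegative functions and then invoke the Steiner symmetrization strategy. Writing $\tilde f = -f \geq 0$ and noting that under the natural convention the symmetric decreasing rearrangement of a nonpositive function satisfies $-f^* = (-f)^*$, the stated inequality is equivalent to
\begin{equation*}
\ird\ird \tilde f(x-y)\,g(y)\,h(x)\,dx\,dy \leq \ird\ird \tilde f^*(x-y)\,g^*(y)\,h^*(x)\,dx\,dy
\end{equation*}
for nonnegative $\tilde f, g, h$ vanishing at infinity, which is the standard form.

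The first step is a layer-cake reduction. Writing $F = \int_0^\infty \chi_{\{F>t\}}\,dt$ for each of the three nonnegative functions and applying Fubini, together with the identity $(\chi_A)^* = \chi_{A^*}$, reduces the claim to the three-set version
\begin{equation*}
\ird\ird \chi_A(x-y)\chi_B(y)\chi_C(x)\,dx\,dy \leq \ird\ird \chi_{A^*}(x-y)\chi_{B^*}(y)\chi_{C^*}(x)\,dx\,dy
\end{equation*}
for measurable sets $A, B, C \subset \R^d$ of finite Lebesgue measure, where $A^*$ denotes the open ball centered at the origin of volume $|A|$.

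The second step is Steiner symmetrization. For a hyperplane $H$ through the origin, one defines $A^H$ by replacing each line-section of $A$ perpendicular to $H$ by a centered interval of the same one-dimensional measure. The key monotonicity
\begin{equation*}
\ird\ird \chi_A(x-y)\chi_B(y)\chi_C(x)\,dx\,dy \leq \ird\ird \chi_{A^H}(x-y)\chi_{B^H}(y)\chi_{C^H}(x)\,dx\,dy
\end{equation*}
follows, via Fubini along the direction perpendicular to $H$, from the one-dimensional three-interval fact that centered intervals maximize the trilinear convolution among equimeasurable sets on $\R$; the latter is classical and can be established by direct computation or via a Brunn--Minkowski argument.

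The final step, which is the main obstacle, is the passage from Steiner to radial rearrangements. I would iterate the Steiner symmetrization along a suitably chosen sequence of hyperplanes and show that the iterated symmetrizations of $\chi_A$ converge in $L^1$ to $\chi_{A^*}$, and likewise for $B$ and $C$. Establishing this convergence is the delicate geometric-analytic ingredient: one typically enumerates hyperplane directions densely in the unit sphere and exploits a compactness argument together with the fact that each Steiner symmetrization weakly decreases the moment $\int |x|^2 \chi_A(x)\,dx$, whose minimum over equimeasurable sets is attained uniquely on balls centered at the origin. Once the $L^1$ convergence is established, passing to the limit through the monotone chain of three-set inequalities yields the desired bound for characteristic functions, and the layer-cake representation then recovers the full trilinear statement.
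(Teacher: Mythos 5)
The paper does not prove this statement at all: it is quoted verbatim as a classical result with a pointer to \cite[Chapter 3]{LieLo01}, so there is no internal proof to compare against. Your outline is, in substance, the standard proof found in that reference and in the rearrangement literature: fix the sign convention $-f^*=(-f)^*$ so the nonpositive-kernel statement becomes the usual trilinear inequality for nonnegative functions, reduce to characteristic functions by the layer-cake formula together with $(\chi_{\{F>t\}})^*=\chi_{\{F^*>t\}}$, prove the one-dimensional three-set inequality, and pass to $\R^d$ by Steiner symmetrization. That architecture is correct, and your handling of the sign convention is exactly what the paper needs, since in its application $f=W_\beta$ with $\beta<0$ is nonpositive and already equal to its rearrangement under this convention.

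The genuine weak point is the final step, which you flag but do not actually carry out, and as sketched it would not go through: enumerating hyperplane directions densely and applying Steiner symmetrizations in that fixed order, while observing that each step does not increase $\int |x|^2\chi_A$, does not by itself yield $L^1$-convergence of the iterates to $\chi_{A^*}$ (monotonicity of the second moment only gives a limit of the moments, not that this limit is the minimal one, and convergence of iterated Steiner symmetrizations for an arbitrary dense sequence of directions is a delicate question in its own right). To close this you need either an adaptive choice of directions (e.g. at each step pick a hyperplane nearly minimizing the second moment, then use an $L^1$-compactness argument and the uniqueness of the ball as minimizer), or the specific scheme of Lieb--Loss combining Steiner symmetrizations with well-chosen rotations, or simply to quote a known convergence theorem for iterated symmetrizations. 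Likewise the one-dimensional three-interval inequality is asserted rather than proved; it is elementary but is the base of the whole induction and deserves its Brunn--Minkowski or sliding argument. You should also record the standing measure-theoretic hypotheses (level sets of finite measure, so the rearrangements are defined, and the usual conventions when the integrals are infinite). With those ingredients supplied, your route is complete and coincides with the cited textbook proof rather than offering an alternative to it.
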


If $\rho\in\mathcal{P}_\mathrm{ac}(\R^d)$, then note that its radially symmetric decreasing rearrangement also belongs to $\mathcal{P}_\mathrm{ac}(\R^d)$; more generally, the $L^m$-norm of $\rho$ equals that of $\rho^*$. If $\beta<0$, we therefore have
$$
	\mathcal{W}_\beta(\rho^*) \leq \mathcal{W}_\beta(\rho) \qquad \mbox{and} \qquad \E_U(\rho^*) = \E_U(\rho),
$$
where the equality for the entropies follows from \cite[Section 3.3, Equation (3)]{LieLo01}. All in all we get that if $\rho \in\mathcal{P}_\mathrm{ac}(\R^d)$ and $\beta<0$, then 
\begin{equation}\label{eq:rearrangement-energy}
	E_\eps(\rho^*)\leq E_\eps(\rho).
\end{equation}
The case $\beta = 0$ is also included and satisfies $E_\eps(\rho^*)\leq E_\eps(\rho)$; see \cite[Lemma 2]{carlen1992competing}.


\subsection{Inequalities of Hardy--Littlewood--Sobolev (HLS) type}
We need some HLS-type inequalities; see \cite{BCL} and \cite[Theorem 1]{carlen1992competing}.

\begin{theorem}[variation of the HLS inequality]\label{variationofHLS}
	Given $\rho\in \mathcal{P}_\mathrm{ac}\cap L^{m}(\R^d)$ and $-d<\lambda<0$, for any $m\ge1-\lambda/d$ 
\begin{equation*}
	\ird\ird |x-y|^{\lambda}\,d\rho(x)\,d\rho(y)\le C(\lambda,d) \|\rho\|_{L^{m}(\R^d)}^{(1-\theta)m_\mathrm{c}},
\end{equation*}
where $C(\lambda,d,m)>0$ is a constant depending on $\lambda$, $d$ and $m$, $0\leq \theta < 1$ and $m_\mathrm{c}=1-\lambda/d$.
\end{theorem}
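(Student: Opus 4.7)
The plan is to combine the classical Hardy--Littlewood--Sobolev inequality with a Hölder interpolation that uses the mass constraint $\|\rho\|_{L^1(\R^d)}=1$ built into $\rho\in\mathcal{P}_\mathrm{ac}(\R^d)$.

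First, since $\mu:=-\lambda\in(0,d)$, I apply the classical HLS inequality to $f=g=\rho$ with the symmetric exponent $p_\mathrm{c}=2d/(2d+\lambda)$, picked so that $2/p_\mathrm{c}+\mu/d=2$. This yields
\[
\ird\ird |x-y|^{\lambda}\,d\rho(x)\,d\rho(y)\le C_1(\lambda,d)\,\|\rho\|_{L^{p_\mathrm{c}}(\R^d)}^{2},
\]
and the admissibility $1<p_\mathrm{c}<\infty$ is immediate from $-d<\lambda<0$.

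Next I compare $p_\mathrm{c}$ with $m_\mathrm{c}=1-\lambda/d=(d-\lambda)/d$: a direct calculation shows $p_\mathrm{c}\le m_\mathrm{c}$, which is equivalent to the positivity of $-\lambda(d+\lambda)$. Hence the hypothesis $m\ge m_\mathrm{c}$ forces $1\le p_\mathrm{c}\le m$, and standard Hölder interpolation with $\|\rho\|_{L^1}=1$ gives
\[
\|\rho\|_{L^{p_\mathrm{c}}(\R^d)}\le\|\rho\|_{L^{1}(\R^d)}^{\alpha}\|\rho\|_{L^{m}(\R^d)}^{1-\alpha}=\|\rho\|_{L^{m}(\R^d)}^{1-\alpha},\qquad \tfrac{1}{p_\mathrm{c}}=\alpha+\tfrac{1-\alpha}{m}.
\]
Inserting this into the HLS estimate produces a bound of the form $C(\lambda,d,m)\,\|\rho\|_{L^m}^{2(1-\alpha)}$.

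Finally, I define $\theta\in[0,1)$ by the identity $(1-\theta)m_\mathrm{c}=2(1-\alpha)$; substituting the expressions for $p_\mathrm{c}$, $m_\mathrm{c}$ and $\alpha$ gives the closed form $1-\theta=m(m_\mathrm{c}-1)/[m_\mathrm{c}(m-1)]$, which equals $1$ exactly when $m=m_\mathrm{c}$ (so $\theta=0$ there) and lies strictly between $0$ and $1$ when $m>m_\mathrm{c}$, matching the stated range of $\theta$. The only substantive ingredient is the HLS inequality itself; the rest is bookkeeping with Hölder exponents, so I do not foresee any genuine obstacle beyond keeping track of the elementary algebra relating $p_\mathrm{c}$, $m_\mathrm{c}$, $\alpha$ and $\theta$.
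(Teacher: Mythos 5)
Your argument is correct, but it takes a genuinely different route from the paper's. The paper quotes the tailored variant $\ird\ird|x-y|^{\lambda}\,d\rho(x)\,d\rho(y)\le C(\lambda,d)\int_{\R^d}\rho^{m_\mathrm{c}}\,dx$ from \cite[Theorem 3.1]{calvez2017equilibria} and then interpolates $L^{m_\mathrm{c}}$ between $L^{1}$ and $L^{m}$ via $1/m_\mathrm{c}=\theta+(1-\theta)/m$, using $\|\rho\|_{L^1}=1$; you instead start from the classical HLS inequality at the symmetric exponent $p_\mathrm{c}=2d/(2d+\lambda)$ and interpolate $L^{p_\mathrm{c}}$ between $L^{1}$ and $L^{m}$. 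Your admissibility checks are right: $1<p_\mathrm{c}<m_\mathrm{c}\le m$, the middle inequality being equivalent to $-\lambda(d+\lambda)>0$, so the H\"older interpolation is legitimate. Moreover, a short computation shows your exponent $2(1-\alpha)=-\lambda m/\bigl(d(m-1)\bigr)$ coincides exactly with the paper's $(1-\theta)m_\mathrm{c}$, and your closed form $1-\theta=m(m_\mathrm{c}-1)/\bigl(m_\mathrm{c}(m-1)\bigr)$ agrees with the $\theta$ defined in the paper's proof, with the stated range $\theta\in[0,1)$ for $m\ge m_\mathrm{c}$ (note $m_\mathrm{c}>1$, so $m>1$ and no division by zero occurs); this agreement is in fact forced by the scaling of both sides under dilations. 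What your route buys is self-containedness: only the textbook HLS inequality is invoked, and you effectively re-derive the cited variant (which is itself usually proved by exactly this HLS-plus-interpolation argument, since taking $m=m_\mathrm{c}$ in your chain gives $\|\rho\|_{L^{p_\mathrm{c}}}^{2}\le\|\rho\|_{L^{m_\mathrm{c}}}^{m_\mathrm{c}}$ by the same interpolation). What the paper's route buys is brevity, since the quoted inequality is already adapted to the unit-mass setting.
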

\begin{proof}
	From \cite[Theorem 3.1]{calvez2017equilibria} we know that for $m_\mathrm{c}=1-\lambda/d$.
\begin{equation}\label{HLSfromCCH}
	\ird\ird |x-y|^{\lambda}d\rho(x)d\rho(y)\le C(\lambda,d) \int_{\R^d}\rho(x)^{m_\mathrm{c}}\,dx.
\end{equation}
By interpolation, taking $\theta$ satisfying
\begin{equation*}
	\frac{1}{m_\mathrm{c}}=\theta+\frac{1-\theta}{m}
\end{equation*}
gives the inequality
\begin{equation}\label{interpolation}
	\|\rho\|^{m_\mathrm{c}}_{L^{m_\mathrm{c}}(\R^d)}\le \|\rho\|_{1}^{\theta m_\mathrm{c}}\|\rho\|_{L^{m}(\R^d)}^{(1-\theta)m_\mathrm{c}}.
\end{equation}
Using \eqref{HLSfromCCH}, \eqref{interpolation} and the fact that $\|\rho\|_{L^1}=1$, one can derive the desired inequality
\begin{equation*}
	\ird\ird |x-y|^{\lambda}d\rho(x)d\rho(y)\le C(\lambda,d) \|\rho\|^{m_\mathrm{c}}_{L^{m_\mathrm{c}}(\R^d)}\le C(\lambda,d) \|\rho\|_{1}^{\theta m_\mathrm{c}}\|\rho\|_{L^{m}(\R^d)}^{(1-\theta)m_\mathrm{c}}.\qedhere
\end{equation*}
\end{proof}

\begin{theorem}[logarithmic HLS inequality] \label{thm: log HLS}
Let $\rho\in\mathcal{P}_\mathrm{ac}(\R^d)$ satisfy $\log(1+|\cdot|^2)\rho \in L^{1}(\R^d)$. Then there exists $C_0\in \R$ depending only on $d$, such that
\begin{equation}\label{logHLS}
	-\ird \ird \log(|x-y|)\rho(x)\rho(y)\,dx\,dy\le \frac{1}{d}\mathcal{E}(\rho)+C_0.
\end{equation}
\end{theorem}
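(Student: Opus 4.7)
The plan is to deduce the logarithmic HLS inequality from the power-law HLS inequality \eqref{HLSfromCCH} of Theorem~\ref{variationofHLS} by sending the exponent $|\lambda|$ to zero. The two ingredients I would use are the pointwise identity $-\log t = \lim_{\alpha\to 0^+}(t^{-\alpha}-1)/\alpha$ together with the elementary one-sided bound $-\log t \le (t^{-\alpha}-1)/\alpha$ for every $t,\alpha>0$, which follows from applying $e^x \ge 1+x$ to $x=-\alpha\log t$. The analogous identity $\log\rho = \lim_{\alpha\to 0^+}(\rho^{\alpha/d}-1)/(\alpha/d)$ will convert the $L^{m_\mathrm{c}}$ bound on the right of \eqref{HLSfromCCH} into the entropy $\mathcal{E}(\rho)$.

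Fixing $\alpha\in(0,d)$ and applying \eqref{HLSfromCCH} with $\lambda=-\alpha$ and $m_\mathrm{c}=1+\alpha/d$ gives
\[
\ird\ird |x-y|^{-\alpha}\,d\rho(x)\,d\rho(y) \le C(\alpha,d)\ird \rho(x)^{1+\alpha/d}\,dx,
\]
where one normalizes the constant so that $C(\alpha,d)\to 1$ as $\alpha\to 0^+$ (compatible with the trivial limit $1=\ird\ird d\rho\,d\rho = \ird\rho\,dx$). Integrating the pointwise bound $-\log|x-y| \le (|x-y|^{-\alpha}-1)/\alpha$ against $d\rho\otimes d\rho$ and combining with the above yields
\[
-\ird\ird \log|x-y|\,d\rho\,d\rho \le \frac{C(\alpha,d)-1}{\alpha} + \frac{C(\alpha,d)}{\alpha}\ird \rho\,(\rho^{\alpha/d}-1)\,dx.
\]
Passing to the limit $\alpha\to 0^+$: the map $\beta\mapsto(\rho^\beta-1)/\beta$ is non-decreasing in $\beta>0$ with infimum $\log\rho$ at $\beta=0^+$, so $\rho(\rho^{\alpha/d}-1)/\alpha$ decreases monotonically to $\rho\log\rho/d$ and monotone convergence gives that the second term on the right tends to $\mathcal{E}(\rho)/d$; the first term tends to a dimensional constant $C_0$ arising as the right derivative of $C(\cdot,d)$ at zero (to be extracted from the proof of \cite[Theorem 3.1]{calvez2017equilibria}).

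The main obstacle is making the passage to the limit rigorous. On the left-hand side I would also exploit the monotonicity of $(|x-y|^{-\alpha}-1)/\alpha$ in $\alpha$ (again from convexity of $t^\beta$) and use the moment hypothesis $\log(1+|\cdot|^2)\rho\in L^1(\R^d)$ to dominate the negative, far-field contribution of $-\log|x-y|$, where $|\log|x-y||$ grows like $\log(1+|x|)+\log(1+|y|)$. More delicate is the case when $\rho\notin L^{1+\alpha/d}(\R^d)$ for every $\alpha>0$, so that the right-hand side above is formally $+\infty$; here I would approximate $\rho$ by compactly supported bounded densities $\rho_N$, apply the argument to each $\rho_N$, and pass to the limit $N\to\infty$ using the finiteness of $\mathcal{E}(\rho)$ together with a Fatou-type lower-semicontinuity argument for the interaction term with the $-\log$ kernel.
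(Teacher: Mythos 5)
The paper does not prove Theorem~\ref{thm: log HLS} at all: it is quoted as a classical result from \cite[Theorem 1]{carlen1992competing} (see also \cite{BCL}). Your route --- obtaining the logarithmic inequality as the $\lambda\to 0^-$ limit of the power-law inequality, using $-\log t\le (t^{-\alpha}-1)/\alpha$ and the monotone limit $(\rho^{\alpha/d}-1)/(\alpha/d)\downarrow\log\rho$ --- is the classical ``differentiate HLS at the endpoint'' derivation, and the technical devices you mention (the moment hypothesis to control the far-field negative part of $-\log|x-y|$, truncation when $\rho\notin L^{1+\alpha/d}$, the observation that the inequality is trivial when $\mathcal{E}(\rho)=+\infty$ while the moment condition and Jensen against $c_d(1+|x|^2)^{-d}$ give $\mathcal{E}(\rho)>-\infty$) are all workable.

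The genuine gap is the constant. Everything hinges on the expansion $C(\alpha,d)=1+O(\alpha)$, i.e.\ on $(C(\alpha,d)-1)/\alpha$ staying bounded as $\alpha\to 0^+$, and this is not something you can ``normalize'': the inequality \eqref{HLSfromCCH} as quoted (and as obtained via the interpolation argument of Theorem~\ref{variationofHLS}) gives no control whatsoever on the behavior of $C(\lambda,d)$ as $\lambda\to 0^-$, and your consistency check that both sides of \eqref{HLSfromCCH} tend to $1$ only shows the optimal constant is $\ge 1$ in the limit, not that it is $1+O(\alpha)$; with any constant whose gap to $1$ is not $O(\alpha)$ the first term blows up and the proof collapses. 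To close this you need Lieb's \emph{sharp} HLS inequality with exponent $p=2d/(2d-\alpha)$, whose explicit constant $\pi^{\alpha/2}\frac{\Gamma((d-\alpha)/2)}{\Gamma(d-\alpha/2)}\bigl(\Gamma(d/2)/\Gamma(d)\bigr)^{-1+\alpha/d}$ does give \eqref{HLSfromCCH} after interpolating with $\|\rho\|_1=1$ (here $2(1-\theta)=m_\mathrm{c}$, so $\|\rho\|_p^2\le\int\rho^{m_\mathrm{c}}$) and is differentiable at $\alpha=0$ with value $1$, producing a finite $C_0(d)$. In other words, the needed input is of the same depth as the cited Carlen--Loss theorem itself; it cannot be ``extracted'' from the non-sharp variant \cite[Theorem 3.1]{calvez2017equilibria} as your sketch suggests. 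With Lieb's constant inserted and the truncation/monotone-convergence details carried out, your argument goes through; as written, it does not.
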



\subsection{Compactness of probability measures with bounded interaction energy}
We first prove the following lemma, which we shall use throughout the paper.
\begin{lemma}\label{lem:alphamomentsinequality}
	Given $\alpha>0$, there exists $\gamma_\alpha>0$ such that, for any $\rho\in\prob\cap C_\mathrm{c}^\infty(\R^d)$ satisfying $\ird x\,d\rho(x)=0$, we have
\begin{equation*}
2\max\{1,2^{\alpha-1}\} \ird |x|^\alpha \,d\rho(x) \ge \ird\ird |x-y|^\alpha\,d\rho(x)\,d\rho(y)\ge\gamma_\alpha \ird |x|^\alpha\,d\rho(x).
\end{equation*}
\end{lemma}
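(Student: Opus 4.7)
The plan is to prove the two bounds separately. For the upper bound I would use the pointwise inequality $|x-y|^\alpha \le (|x|+|y|)^\alpha \le \max\{1,2^{\alpha-1}\}(|x|^\alpha + |y|^\alpha)$, which follows from the triangle inequality together with subadditivity of $t \mapsto t^\alpha$ when $\alpha \le 1$ and its convexity when $\alpha \ge 1$. Integrating with respect to $d\rho(x)\,d\rho(y)$ and using that $\rho$ is a probability measure yields the stated upper bound with constant $2\max\{1,2^{\alpha-1}\}$.

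For the lower bound, the centering hypothesis $\ird x\,d\rho(x) = 0$ is what makes everything work. In the regime $\alpha \ge 1$, the function $y \mapsto |x-y|^\alpha$ is convex for each fixed $x$, so Jensen's inequality gives
\begin{equation*}
\ird |x-y|^\alpha\,d\rho(y) \ge \left| x - \ird y\,d\rho(y) \right|^\alpha = |x|^\alpha,
\end{equation*}
and integrating against $d\rho(x)$ produces the desired bound with $\gamma_\alpha = 1$.

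The main obstacle is the regime $0 < \alpha < 1$, where $|\cdot|^\alpha$ is concave along radii and Jensen's inequality reverses direction. My proposed strategy would be to (i) normalize so that $\ird |x|^\alpha\,d\rho = 1$, (ii) use Markov's inequality to find a radius $R$ of order one with $\rho(B_R) \ge 1/2$, and (iii) split the double integral so as to isolate the contribution of pairs $(x,y)$ with $y \in B_R$ and $x \notin B_{2R}$, for which $|x-y| \ge |x|/2$ gives a usable pointwise lower bound $(|x|/2)^\alpha$ on the integrand. The centering condition is needed to handle the complementary case in which most of the mass of $\rho$ sits inside $B_{2R}$: the constraint $\ird x\,d\rho = 0$ prevents $\rho$ from collapsing to one side of the origin, so that pairs of points lying on opposite sides of the origin must contribute a definite amount to $\ird\ird |x-y|^\alpha\,d\rho\,d\rho$. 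The delicate step is converting this heuristic into a clean positive constant $\gamma_\alpha$ depending only on $\alpha$ and $d$; the most natural route is a compactness--contradiction argument based on Prokhorov's theorem (Theorem~\ref{prok}), ruling out a normalized sequence $(\rho_n)$ along which the left-hand side tends to zero.
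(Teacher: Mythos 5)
Your upper bound is exactly the paper's argument: the pointwise inequality of Lemma~\ref{aux:triangleinequality} integrated against $d\rho(x)\,d\rho(y)$. For $\alpha\ge 1$ your Jensen argument is also correct and is genuinely different from the paper, which instead treats all $\alpha>0$ at once by normalizing a putative counterexample sequence through dilations and running a Prokhorov compactness--contradiction argument; your route is more elementary and even yields the explicit constant $\gamma_\alpha=1$ on that range.

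The genuine gap is the range $0<\alpha<1$, and it is not merely that the heuristic in your step (iii) is hard to quantify: the lower bound is \emph{false} there, so neither your splitting nor the compactness fallback you propose can close it. Take $d=1$ (or place everything on a coordinate axis) and
\begin{equation*}
\rho_n=\Bigl(1-\tfrac1n\Bigr)\varphi_n^{+}+\tfrac1n\,\varphi_n^{-},
\end{equation*}
where $\varphi_n^{+}$ is a smooth symmetric bump of mass one and width $1/n$ centered at $1$ and $\varphi_n^{-}$ is one centered at $-(n-1)$. Then $\ird x\,d\rho_n(x)=0$ and $\ird |x|^\alpha\,d\rho_n(x)\to 1$, while
\begin{equation*}
\ird\ird |x-y|^\alpha\,d\rho_n(x)\,d\rho_n(y)\le \Bigl(\tfrac2n\Bigr)^{\alpha}+\frac{2(n+1)^\alpha}{n}\longrightarrow 0
\qquad(\alpha<1),
\end{equation*}
because the only nonnegligible interactions are the cross terms, which carry weight $2(1-\tfrac1n)\tfrac1n$ at distance about $n$. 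In words: for a concave power, a vanishing amount of mass sent far away restores the center of mass at a cost $O(n^{\alpha-1})$ both in $\alpha$-moment and in interaction, so your intuition that ``pairs on opposite sides of the origin must contribute a definite amount'' fails quantitatively. This example also shows where the paper's own proof (which is exactly the compactness--contradiction scheme you defer to) is problematic for $\alpha<1$: after normalizing $\ird|x|^\alpha\,d\nu_n=1$ it deduces from $\ird x\,d\nu_n=0$ that the weak-$^*$ limit is $\delta_0$, but a bounded $\alpha$-moment with $\alpha<1$ gives no uniform integrability of $x$, and the centered sequence above converges weakly-$^*$ to $\delta_1\neq\delta_0$, after which no contradiction can be extracted. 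So the honest conclusion is: your proof is complete and optimal for $\alpha\ge1$ (Jensen), and for $0<\alpha<1$ no constant $\gamma_\alpha>0$ exists, so any application of the lemma should be restricted to $\alpha\ge1$.
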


To prove Lemma~\ref{lem:alphamomentsinequality} we need a variation of the classical triangle inequality.
\begin{lemma}\label{aux:triangleinequality} 
Given $\alpha>0$ and $x,y\in\R^d$, we have the inequality
\begin{equation}\label{trianglealpha}
	|x-y|^\alpha\le \max\{1,2^{\alpha-1}\}(|x|^\alpha+|y|^\alpha).
\end{equation}
\end{lemma}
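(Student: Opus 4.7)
The plan is to split into two cases according to whether $\alpha\ge 1$ or $0<\alpha\le 1$, in both cases first applying the ordinary triangle inequality $|x-y|\le |x|+|y|$ and then estimating $(|x|+|y|)^\alpha$ in terms of $|x|^\alpha+|y|^\alpha$ by elementary means.

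First I would handle $\alpha\ge 1$. Here the function $t\mapsto t^\alpha$ is convex on $[0,\infty)$, so applying the midpoint convexity inequality to $a=|x|$ and $b=|y|$ gives
\begin{equation*}
\left(\frac{|x|+|y|}{2}\right)^\alpha \le \frac{|x|^\alpha+|y|^\alpha}{2},
\end{equation*}
which rearranges to $(|x|+|y|)^\alpha \le 2^{\alpha-1}(|x|^\alpha+|y|^\alpha)$. Combined with monotonicity of $t\mapsto t^\alpha$ applied to the triangle inequality, this yields
\begin{equation*}
|x-y|^\alpha \le (|x|+|y|)^\alpha \le 2^{\alpha-1}(|x|^\alpha+|y|^\alpha),
\end{equation*}
and since $2^{\alpha-1}\ge 1$ in this regime, the factor $\max\{1,2^{\alpha-1}\}=2^{\alpha-1}$ is exactly what appears.

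Next I would handle $0<\alpha\le 1$. Here the function $t\mapsto t^\alpha$ is concave on $[0,\infty)$ with $0^\alpha=0$, so it is subadditive; one way to see this directly is to write, for $a,b>0$,
\begin{equation*}
(a+b)^\alpha = a^\alpha\left(1+\tfrac{b}{a}\right)^\alpha \le a^\alpha\bigl(1+(b/a)^\alpha\bigr) = a^\alpha+b^\alpha,
\end{equation*}
using $(1+s)^\alpha\le 1+s^\alpha$ for $s\ge 0$ and $0<\alpha\le 1$ (which in turn follows by noting that both sides agree at $s=0$ and the derivative of the right-hand side dominates that of the left for $s\ge 0$). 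Applied with $a=|x|,b=|y|$ and combined with $|x-y|\le |x|+|y|$, this gives $|x-y|^\alpha\le |x|^\alpha+|y|^\alpha$, matching $\max\{1,2^{\alpha-1}\}=1$ in this range.

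No real obstacle is anticipated; the only mildly delicate point is the subadditivity step for $0<\alpha<1$, where one should either invoke concavity together with $0^\alpha=0$ or verify the scalar inequality $(1+s)^\alpha\le 1+s^\alpha$ by a brief monotonicity argument as above.
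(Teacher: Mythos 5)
Your argument is correct and is essentially identical to the paper's proof: the case $\alpha\ge 1$ via midpoint convexity of $t\mapsto t^\alpha$ combined with the triangle inequality, and the case $0<\alpha\le 1$ via the factorization $(a+b)^\alpha=a^\alpha(1+b/a)^\alpha$ together with $(1+s)^\alpha\le 1+s^\alpha$. The only difference is that you additionally justify the scalar inequality $(1+s)^\alpha\le 1+s^\alpha$ by a monotonicity argument, which the paper simply asserts as sublinear growth.
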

\begin{proof}
Suppose first $\alpha\geq1$. In this case we exploit the monotonicity and the convexity of the function $t \mapsto t^\alpha$ and the triangle inequality to say
\begin{equation*}
	\frac{|x-y|^\alpha}{2^\alpha}=\left|\frac{x-y}{2}\right|^\alpha\le\left(\frac{|x|+|y|}{2}\right)^\alpha \le \frac{1}{2}(|x|^\alpha+|y|^\alpha).
\end{equation*}
Multiplying the previous equation by $2^\alpha$ gives \eqref{trianglealpha} for $\alpha\ge 1$.

We now suppose $\alpha<1$. In this case we exploit the sublinear growth of the function $t \mapsto t^\alpha$, namely that, for any $r\geq0$,
\begin{equation*}
(1+r)^\alpha\le 1+ r^\alpha.
\end{equation*}
Assuming $x\neq0$, or the result is trivial, the triangle inequality and the previous inequality yield
\begin{equation*}
	|x-y|^\alpha\le (|x|+|y|)^\alpha=|x|^\alpha\left(1+\frac{|y|}{|x|}\right)^\alpha\le|x|^\alpha\left(1+\frac{|y|^\alpha}{|x|^\alpha}\right)=|x|^\alpha+|y|^\alpha,
\end{equation*}
which gives the desired result.
\end{proof}

\begin{proof}[Proof of Lemma \ref{lem:alphamomentsinequality}]
	The upper bound follows easily from Lemma~\ref{aux:triangleinequality}. We show the lower bound by classical compactness arguments. By contradiction we suppose that the inequality does not hold for any $\gamma_\alpha>0$. Therefore, we can assume that there exists a sequence $(\rho_n)_{n\in\N}\subset\mathcal{P}(\R^d)\cap C_\mathrm{c}^\infty(\R^d)$ such that, for all $n \in\N$,
\begin{equation*}
	\ird\ird |x-y|^\alpha\,d\rho_n(x)\,d\rho_n(y)<\frac{1}{n} \ird |x|^\alpha\,d\rho_n(x).
\end{equation*}
Next, we realize that the same inequality is satisfied by any arbitrary rescaling of our sequence $(T_{r_n}\#\rho_n)_{n\in\N}$. This follows from the scalings
\begin{equation*}
	\ird |x|^\alpha\,dT_r\#\rho_n(x)=r^\alpha\ird |x|^\alpha\,d\rho_n(x)
\end{equation*}
and
\begin{equation*}
	\ird\ird |x-y|^\alpha\,dT_r\#\rho_n(x)\,dT_r\#\rho_n(y)=r^\alpha\ird\ird |x-y|^\alpha\,d\rho_n(x)\,d\rho_n(y).
\end{equation*}
Then, for any $n$, we pick $r_n>0$ such that
\begin{equation*}
	\ird |x|^\alpha\,dT_{r_n}\#\rho_n(x)=1.
\end{equation*}
Using this we define a sequence $(\nu_n)_{n\in\N}$ of probability measures such that, for all $n\in\N$, $\nu_n=T_{r_n}\#\rho_n$. This satisfies
\begin{equation}\label{eq:alphamoment}
	\ird |x|^\alpha\,d\nu_n(x)=1\quad \mbox{and}\quad \ird\ird |x-y|^\alpha\,d\nu_n(x)\,d\nu_n(y)<\frac{1}{n}\quad\mbox{for any $n\in\N$.}
\end{equation}
By \eqref{eq:alphamoment} we know that $\{\nu_n\}_{n\in\N}$ is a tight family due to Theorem \ref{prok}, since 
$$
\int_{B_R^\mathrm{c}} d\nu_n(x)\leq \frac1{R^\alpha}\int_{B_R^\mathrm{c}} |x|^\alpha\,d\nu_n(x)=\frac1{R^\alpha}\,.
$$
Therefore there exists $\nu_\infty\in\mathcal{P}(\R^d)$, such that $\nu_n\rightharpoonup\nu_\infty$ weakly-$^*$ as $n\to\infty$. By lower semicontinuity of the interaction energy and \eqref{eq:alphamoment} we get that
\begin{equation*}
\ird\ird |x-y|^\alpha\,d\nu_\infty(x)\,d\nu_\infty(y)=0.
\end{equation*}
Using the hypothesis that $\ird x\,d\nu_n(x)=0$, we deduce that $\nu_\infty=\delta_0$. Now, we derive a contradiction from this; in particular we want to show that 
\begin{equation}\label{lowerbound}
	\ird\ird |x-y|^\alpha \,d\nu_n(x)\,d\nu_n(y)>\frac{1}{\max\{2,2^{\alpha}\}} \qquad\mbox{for $n$ large enough},
\end{equation}
which contradicts \eqref{eq:alphamoment}. Because $\nu_n\rightharpoonup\delta_0$, for any $\eps>0$ and $\eta>0$ there exists $n_0\in\N$ large enough such that
\begin{equation*}
\nu_n(B_\eta)\ge 1-\eps \qquad\mbox{for any $n\ge n_0$.}
\end{equation*}
Then, we deduce
\begin{align*}
	\ird\ird |x-y|^\alpha\;d\nu_n(x)d\nu_n(y) &\displaystyle\ge \int_{B_\eta^\mathrm{c}}\int_{B_\eta} |x-y|^\alpha\;d\nu_n(x)d\nu_n(y) \ge (1-\eps)\int_{B_\eta^\mathrm{c}}(|y|-\eta)^\alpha\;d\nu_n(y)\\
	&\ge \frac{1-\eps}{\max\{1,2^{\alpha-1}\}}\int_{B_\eta^\mathrm{c}}|y|^\alpha\;d\nu_n(y)-(1-\eps)\eta^\alpha\\
	&= \frac{1-\eps}{\max\{1,2^{\alpha-1}\}}\left(1-\int_{B_\eta}|y|^\alpha\;d\nu_n(y)\right)-(1-\eps)\eta^\alpha\\
	&\ge \frac{1-\eps}{\max\{1,2^{\alpha-1}\}}(1-\eta^\alpha)-(1-\eps)\eta^\alpha,
\end{align*}
where we have used the triangle inequality $\max\{1,2^{\alpha-1}\}((|y|-\eta)^\alpha+\eta^\alpha)\ge |y|^\alpha$ given by Lemma~\ref{aux:triangleinequality}. Taking $\eta$ and $\eps$ small enough yields \eqref{lowerbound}, which in turn shows the desired contradiction.
\end{proof}

Because sequences of probability measures with uniformly bounded moments must be tight, from Lemma~\ref{lem:alphamomentsinequality} we can easily deduce the following: given a sequence of $(\rho_n)_{n\in\N}\subset\prob$ with center of mass at zero, if 
\begin{equation*}
	\sup_{n\in\N}\ird\ird |x-y|^\alpha \,d\rho_n(x)\,d\rho_n(y)<\infty,
\end{equation*}
then $\{\rho_n\}_{n\in\N}$ is a tight family of probability measures, implying that $\{\rho_n\}_{n\in\N}$ is weakly-$^*$ relatively compact. In this work, we make use of the following more refined version of this observation, whose proof does not directly require Lemma~\ref{lem:alphamomentsinequality}

\begin{lemma}\label{lem tightness through the interaction energy}
Let us consider a positive interaction potential satisfying {\bf (H)} such that
\begin{equation}\label{hyp growth of W}
	\lim_{|x|\to\infty}W(x)=+\infty.
\end{equation}
Given a sequence $(\rho_n)_{n\in\N} \subset \mathcal{P}(\R^d)$, if 
\begin{equation}\label{boundednessoftheinteraction}
	\liminf_{n\to\infty}\ird \ird W(x-y)\;d\rho_n(x)d\rho_n(y)<\infty,
\end{equation}
then $\{\rho_n\}_{n\in\N}$ is weakly-$^*$ relatively compact up to translations. That is to say, there exists $\rho_\infty\in\prob$, a subsequence $\{\rho_{n_i}\}_{i\in\N}$ and a sequence of points $\{y_i\}_{i\in\N} \subset \R^d$, such that 
\begin{equation*}
H_{y_i}\#\rho_{n_i}\rightharpoonup \rho_\infty \quad \mbox{for all $i \in\N$},
\end{equation*}
where $H_{y_i}\colon \R^d\to \R^d$ is the translation map $H_{y_i}(x)=x-y_i$.
\end{lemma}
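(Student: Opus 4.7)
The plan is to exploit the translation invariance of the interaction energy: shifting $\rho_n$ by any vector $y_n$ leaves $\iint W(x-y)\,d\rho_n(x)\,d\rho_n(y)$ unchanged, so the freedom to translate reduces the problem to choosing shifts that make the family tight.

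First, I would pass to a subsequence (still indexed by $n$) along which $\iint W(x-y)\,d\rho_n(x)\,d\rho_n(y)\le M$ for some $M<\infty$, using \eqref{boundednessoftheinteraction}. Next, for each $n$ define the (single-variable) potential
\begin{equation*}
I_n(y)=\ird W(x-y)\,d\rho_n(x)\in[0,\infty],
\end{equation*}
which is well-defined and nonnegative because $W$ is positive and lower semicontinuous. Fubini gives $\int I_n\,d\rho_n\le M$, so by Markov's inequality $\rho_n(\{y:I_n(y)>2M\})\le 1/2$. Hence the set $A_n=\{y:I_n(y)\le 2M\}$ has $\rho_n(A_n)\ge 1/2$, and in particular is nonempty, so we may pick a point $y_n\in A_n$.

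Now define $\tilde\rho_n=H_{y_n}\#\rho_n$. By the change-of-variables formula, $\int W(x)\,d\tilde\rho_n(x)=\int W(x-y_n)\,d\rho_n(x)=I_n(y_n)\le 2M$, which is uniform in $n$. Because $W$ is positive and $W(x)\to\infty$ as $|x|\to\infty$ by \eqref{hyp growth of W}, for each $\delta>0$ there exists $R>0$ such that $W(x)\ge 2M/\delta$ whenever $|x|\ge R$. Then, using $W\ge 0$ again,
\begin{equation*}
\tilde\rho_n(B_R^{\mathrm c})\le \frac{\delta}{2M}\int_{B_R^{\mathrm c}}W(x)\,d\tilde\rho_n(x)\le \frac{\delta}{2M}\cdot 2M=\delta,
\end{equation*}
uniformly in $n$. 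This shows that $\{\tilde\rho_n\}_{n\in\N}$ is tight, so Prokhorov's theorem (Theorem \ref{prok}) yields a weakly-$^*$ convergent subsequence $\tilde\rho_{n_i}\rightharpoonup\rho_\infty$ with $\rho_\infty\in\mathcal{P}(\R^d)$, which is exactly the claim.

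The only point needing care is the selection of $y_n$; this is handled cleanly by Markov's inequality applied to $I_n$, which shows that points with controlled potential are not only available but in fact carry at least half the mass of $\rho_n$. Everything else is a direct consequence of positivity of $W$ and its growth at infinity, combined with translation invariance of the double integral.
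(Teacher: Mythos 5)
Your proof is correct, and it takes a genuinely different route from the paper's. The paper argues by contradiction: it negates tightness up to translations to get $\eps>0$ with $\inf_{x}\rho_n(B_R^\mathrm{c}(x))>\eps$ for all $R$, restricts the double integral to $\{|x-y|>R\}$, and lets $R\to\infty$ to force the interaction energy to blow up, contradicting \eqref{boundednessoftheinteraction}. You instead construct the translations explicitly: Tonelli plus Markov's inequality applied to $I_n(y)=\ird W(x-y)\,d\rho_n(x)$ shows that at least half the mass of $\rho_n$ sits at points $y$ with $I_n(y)\le 2M$, and choosing $y_n$ there gives the uniform one-body bound $\ird W\,d(H_{y_n}\#\rho_n)\le 2M$; tightness then follows from a second Markov/Chebyshev step using $W\ge 0$ and \eqref{hyp growth of W}, and Prokhorov finishes. (Minor cosmetic point: take $M>0$, e.g.\ $M=\liminf+1$, so that dividing by $2M$ is harmless; also note the opening remark about translation invariance of the double integral is not actually used.) What your approach buys is a constructive, quantitative choice of the shifts (a ``median-type'' point carrying at least half the mass) and the stronger uniform confinement estimate $\ird W\,d\tilde\rho_n\le 2M$ along the subsequence, which is reusable elsewhere; it also sidesteps the somewhat delicate task of correctly negating ``tight up to translations,'' which is where the paper's contradiction argument concentrates its care. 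The paper's proof, in turn, is shorter and avoids Fubini, working directly with the two-body integral. Both proofs ultimately rely on Prokhorov's theorem (Theorem \ref{prok}).
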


\begin{proof}
We show the result by contradiction. Assume that $\{\rho_n\}_{n\in\N}$ is not weakly-$^*$ relatively compact up to translations. By Prokhorov's theorem (Theorem \ref{prok}), this means that $\{\rho_n\}_{n\in\N}$ is not tight up to translations. That is, there exists $\eps>0$ such that, for any $R>0$,
\begin{equation}\label{nottight}
\liminf_{n\to\infty}\inf_{x\in\R^d} \rho_n(B_R^\mathrm{c}(x))>\eps,
\end{equation}
where $B_R(x)$ is the ball of radius $R$ centered at $x$.
Given $R>0$, we estimate the interaction energy as
\begin{align*}
	\ird \ird W(x-y)\;d\rho_n(x)d\rho_n(y) &\ge \int_{\{x,y \in \R^d \mid |x-y|>R\}} W(x-y)\;d\rho_n(x)d\rho_n(y)\\
	&\ge \inf_{z\in B_R^\mathrm{c}}W(z) \inf_{x\in\R^d} \rho_n(B_R^\mathrm{c}(x)).
\end{align*}
Using \eqref{nottight} and taking the limit when $n\to\infty$ we have
\begin{equation*}
\liminf_{n\to\infty} \ird \ird W(x-y)\;d\rho_n(x)d\rho_n(y)\ge \eps \inf_{z\in B_R^\mathrm{c}}W(z). 
\end{equation*}
By taking the limit $R\to\infty$ and using our hypothesis in \eqref{hyp growth of W} we have 
\begin{equation*}
\liminf_{n\to\infty} \ird \ird W(x-y)\;d\rho_n(x)d\rho_n(y)=\infty,
\end{equation*}
which contradicts the boundedness of the interaction energy assumed in \eqref{boundednessoftheinteraction}.
\end{proof}

\begin{remark}
Lemma~\ref{lem tightness through the interaction energy} tacitly appears in \cite{simione2015existence}. Here we give a simple alternative proof that only uses the classical Prokhorov's theorem and does not employ the more refined concentration-compactness lemma of \cite{lions1984concentration}.
\end{remark}

Finally in this section we give a quick corollary of Lemma~\ref{lem:alphamomentsinequality}: we can bound the logarithmic entropy $\E$ by the interaction energy $\mathcal{W}_\beta$.
\begin{corollary}\label{cor: Carleman ineq}
For any $\beta,\eps>0$ there exists $C_{\beta,\eps}>0$ such that, for any $\rho\in\prob$, we have that
\begin{equation*}
\mathcal{E}(\rho)\ge -C_{\beta,\eps}-\eps \mathcal{W}_\beta(\rho).
\end{equation*}
\end{corollary}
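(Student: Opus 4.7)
The plan is to combine the Gibbs inequality (nonnegativity of relative entropy) with Lemma~\ref{lem:alphamomentsinequality} and the translation invariance of $\mathcal{E}$ and $\mathcal{W}_\beta$. Since $U(r)=r\log r$ has $U_\mathrm{s}=+\infty$, I may restrict to $\rho\in\mathcal{P}_\mathrm{ac}(\R^d)$ with $\mathcal{W}_\beta(\rho)<\infty$; in the remaining cases one of the two sides renders the inequality trivial.

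For a parameter $\lambda>0$ to be tuned later, consider the reference probability density
$$\mu_\lambda(x)=Z_\lambda^{-1}e^{-\lambda|x|^\beta}, \qquad Z_\lambda:=\ird e^{-\lambda|x|^\beta}\,dx,$$
which is well-defined since $\beta>0$. The Gibbs inequality $\ird\rho\log(\rho/\mu_\lambda)\,dx\ge 0$ rearranges to
$$\mathcal{E}(\rho)\ge -\log Z_\lambda-\lambda\ird|x|^\beta\,d\rho(x).$$

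The remaining task is to dominate $\ird|x|^\beta\,d\rho$ by $\mathcal{W}_\beta(\rho)$. Since $\mathcal{W}_\beta(\rho)<\infty$, Fubini produces a point $y_0\in\R^d$ with $\ird|x-y_0|^\beta\,d\rho(x)<\infty$, and by translation invariance of both $\mathcal{E}$ and $\mathcal{W}_\beta$ I may replace $\rho$ by its translate with finite $\beta$-moment and (when the first moment exists) vanishing center of mass. Approximating $\rho$ by a sequence of smooth, compactly supported probability densities with zero first moment and applying Lemma~\ref{lem:alphamomentsinequality} with $\alpha=\beta$, then passing to the limit by means of lower semicontinuity of $\rho\mapsto\ird\ird|x-y|^\beta\,d\rho\,d\rho$ and convergence of $\rho\mapsto\ird|x|^\beta\,d\rho$ along the approximation, yields
$$\ird|x|^\beta\,d\rho(x)\le\frac{1}{\gamma_\beta}\ird\ird|x-y|^\beta\,d\rho(x)\,d\rho(y)=\frac{2\beta}{\gamma_\beta}\mathcal{W}_\beta(\rho).$$

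Combining the two displays,
$$\mathcal{E}(\rho)\ge -\log Z_\lambda-\frac{2\lambda\beta}{\gamma_\beta}\mathcal{W}_\beta(\rho),$$
and choosing $\lambda=\eps\gamma_\beta/(2\beta)$ with $C_{\beta,\eps}:=\log Z_\lambda$ (which depends only on $\beta$ and $\eps$) delivers the claim. The main technical obstacle lies in the approximation step that extends Lemma~\ref{lem:alphamomentsinequality} from smooth, compactly supported densities to a general centered $\rho$ with finite $\beta$-moment; this is routine through mollification and truncation, together with the lower semicontinuity of the interaction energy under weak-$^*$ convergence and the uniform control of the $\beta$-moment that accompanies finiteness of $\mathcal{W}_\beta(\rho)$.
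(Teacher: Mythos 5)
Your argument is essentially the paper's: the proof in the paper quotes the classical Carleman inequality (which is established precisely by your Gibbs-inequality computation with the reference density $Z_\lambda^{-1}e^{-\lambda|x|^\beta}$) and then, exactly as you do, controls the $\beta$-moment by the interaction energy via Lemma~\ref{lem:alphamomentsinequality} after centering, with the same tuning of constants ($2\beta\eps_0=\gamma_\beta\eps$, i.e.\ your $\lambda=\eps\gamma_\beta/(2\beta)$). One small correction to your limit passage: what you need there is convergence (or at least a $\limsup$ bound) of the approximants' interaction energies toward $\iint|x-y|^\beta\,d\rho\,d\rho$ --- which your truncation-plus-mollification scheme does provide by dominated convergence, since $\mathcal{W}_\beta(\rho)<\infty$ --- together with lower semicontinuity of the $\beta$-moment under weak-$^*$ convergence; lower semicontinuity of the interaction energy, as you invoke it, runs in the unhelpful direction.
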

\begin{proof}
	Choose $\beta,\eps>0$. We recall the classical version of Carleman's inequality \cite[Lemma 3.4]{BCC12}: for any $\beta,\eps_0>0$ there exists $C_{\beta,\eps_0}>0$ such that, for any $\rho\in\prob$, we have
\begin{equation*}
\mathcal{E}(\rho)\ge -C_{\beta,\eps_0}-\eps_0 \ird |x|^\beta \,d\rho(x).
\end{equation*}
Using Lemma~\ref{lem:alphamomentsinequality} and picking $2\beta\eps_0=\gamma_\beta\eps$, we obtain the desired result.
\end{proof}


\section{Nonexistence of local minimizers and critical points}
\begin{theorem}[nonexistence of local minimizers and critical points]\label{thm:nonexistence-local}
Let $U(r)=r\log(r)$, i.e., we consider the case of linear diffusion. Suppose that the interaction potential $W$ is positive, satisfies {\bf (H)} and that, for any $\delta>0$, $W \in L^\infty(\R^d\setminus B_\delta)$. Then for any $\eps>0$ the energy $E_\eps=\eps \mathcal{E}+\mathcal{W}$ does not admit any $d_p$-local minimizer for any $p\in [1,\infty]$ in $\mathcal{P}(\R^d)$.  Moreover, if $W$ is Lipschitz continuous, then there are no critical points of $E_\eps$ in $\mathcal{P}_\mathrm{ac}(\R^d)$.
\end{theorem}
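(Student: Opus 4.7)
The plan is proof by contradiction, starting in both cases from the Euler--Lagrange conditions recorded in Section 2.3. For $U(r)=r\log r$ one has $U'(r)=\log r+1$, so on every closed connected component $A_i$ of $\supp\rho$ the Euler--Lagrange equality reads
\[
\eps\log\rho(x)+\eps+(W*\rho)(x)=C_i \quad\text{a.e.\ on }A_i,
\]
i.e.\ $\rho(x)=K_i\exp(-(W*\rho)(x)/\eps)$ on $A_i$ for positive constants $K_i=e^{(C_i-\eps)/\eps}$. This Gibbs-type nonlinear integral equation is the starting point in both parts of the argument.

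For the nonexistence of $d_p$-local minimizers, I would first reduce to the case $p=\infty$: since $d_p\le d_\infty$ on $\mathcal{P}(\R^d)\times\mathcal{P}(\R^d)$ (by the standard $L^p$-versus-$L^\infty$ estimate against any transport plan), every $d_p$-local minimizer is in particular a $d_\infty$-local minimizer, so the full Euler--Lagrange conditions \eqref{firstorderconditionsprelim} from Section 2.3 apply, including the inequality $\eps U'(\rho)+W*\rho\ge C_i$ a.e.\ on $A_i+B_r$. Because $U'(\rho)=\log\rho+1$ diverges to $-\infty$ wherever $\rho=0$, this inequality can only hold if $\rho>0$ a.e.\ on the whole Minkowski sum $A_i+B_r$; hence $A_i+B_r\subseteq\supp\rho$. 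But $A_i+B_r$ is connected and strictly larger than $A_i$, which contradicts the choice of $A_i$ as a connected component of $\supp\rho$.

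For the nonexistence of critical points in $\mathcal{P}_\mathrm{ac}(\R^d)$ when $W$ is Lipschitz we can no longer rely on the fattening inequality, only on the equality on $A_i$. The Lipschitz hypothesis is used to upgrade the Gibbs formula to a global statement: $W*\rho$ is Lipschitz on $\R^d$ and, combining the boundedness of $W$ outside $B_\delta$ with Lipschitz regularity near the origin, also bounded. Consequently $\phi_i(x):=K_i\exp(-(W*\rho)(x)/\eps)$ is a strictly positive Lipschitz function on $\R^d$ with $\inf_{\R^d}\phi_i>0$, and coincides with $\rho$ on $A_i$. The mass constraint $\int_{A_i}\rho\,dx\le 1$ then forces $|A_i|<\infty$. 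The argument is closed by showing, via the self-consistency of the nonlinear integral equation and the continuity of $\phi_i$ across $\partial A_i$, that $\supp\rho$ must be open and hence, by connectedness of $\R^d$, equal to $\R^d$; but then $\rho\ge\inf_{\R^d}\phi_i>0$ a.e., which is incompatible with $\rho\in L^1(\R^d)$.

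The main obstacle is the critical point case, because the paper's definition only constrains the first variation on the support, so a priori one cannot exclude solutions with a jump across the boundary of a proper closed support. The Lipschitz regularity of $W$ is what lets one promote the Gibbs formula from a pointwise relation on $A_i$ to a globally regular extension $\phi_i$, which in turn powers the self-consistency argument propagating positivity from $A_i$ across $\partial A_i$ and eventually contradicting integrability.
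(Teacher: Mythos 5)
Your overall strategy --- the Gibbs-type identity from the Euler--Lagrange conditions, forcing full support, and contradicting the unit-mass constraint --- is the same as the paper's, but both halves of your argument have a genuine gap.

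In the local-minimizer part, the step where you claim a contradiction is not one. From $\rho>0$ a.e.\ on $A_i+B_r$ you may conclude $A_i+B_r\subseteq\supp\rho$, and by maximality of the component $A_i+B_r\subseteq A_i$; but this is perfectly consistent when $A_i=\R^d$, in which case $A_i+B_r=A_i$ and nothing is contradicted. A local minimizer with full support is not excluded a priori --- that is exactly the case the rest of the proof must handle. The correct conclusion of the fattening step is only $\supp\rho=\R^d$, and the actual contradiction comes afterwards: from $\eps\log\rho+W*\rho=C_1$ on all of $\R^d$ one gets $\|\rho\|_{L^\infty(\R^d)}\le e^{(C_1-\inf W)/\eps}$ (using $W\ge0$), then $\|W*\rho\|_{L^\infty(\R^d)}\le\|W\|_{L^1(B_\delta)}\|\rho\|_{L^\infty(\R^d)}+\|W\|_{L^\infty(B_\delta^{\mathrm{c}})}<\infty$, hence $\rho\ge e^{(C_1-\|W*\rho\|_{L^\infty(\R^d)})/\eps}>0$ everywhere, which is incompatible with $\ird\rho\,dx=1$. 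This integrability argument --- the place where $W\in L^1_{\mathrm{loc}}$, $W\in L^\infty(\R^d\setminus B_\delta)$ and positivity of $W$ actually enter --- is missing from your minimizer case (you only invoke its analogue for critical points). Also, to deduce $\rho(x)>0$ at a point $x\in(A_i+B_r)\setminus\supp\rho$ you must first note that $W*\rho(x)\le\|W\|_{L^\infty(\R^d\setminus B_{d(x,\supp\rho)})}<\infty$; otherwise the inequality could formally survive with $-\infty+\infty$.

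In the critical-point part, the decisive step ``the support must be open'' is not justified by what you write. The Euler--Lagrange equality constrains $\rho$ only on its support; continuity of your extension $\phi_i$ across $\partial A_i$ says nothing about $\rho$ off $A_i$, so a density equal to $\phi_i\ge\inf_{\R^d}\phi_i>0$ on a proper closed set and dropping discontinuously to $0$ outside is not excluded by ``self-consistency'' alone --- you would need to know that $\rho$ itself has a continuous representative. That is precisely what the Lipschitz hypothesis buys in the paper: criticality gives, in the sense of distributions, $\nabla\rho=\rho\,\nabla W*\rho$, so $W$ Lipschitz yields $\nabla\rho\in L^1(\R^d)$, the Sobolev inequality gives $\rho\in L^{d/(d-1)}(\R^d)$, and iterating gives $\rho\in C^\alpha(\R^d)$; only then does the Gibbs lower bound on the support exclude a vanishing point, force $\supp\rho=\R^d$ with a single constant $C$, and reduce to the same integrability contradiction as before. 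Your use of the Lipschitz assumption (merely to make $W*\rho$ bounded and Lipschitz) does not substitute for this regularity bootstrap.
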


\begin{remark}\label{rem bounded domain}
Global minimizers of $E_\eps=\eps \mathcal{E}+\mathcal{W}$ always exist in a bounded domain $\Omega$ with null flux boundary conditions; this follows from compactness and the lower semicontinuity of the energy. If $W$ is Lipschitz, the argument below shows that, for any steady state $\rho$,
\begin{equation}
\|\rho\|_{L^\infty(\R^d)}\le |\Omega|^{-1}e^{\frac{\|W\|_{L^\infty}-\inf W}{\eps}}.
\end{equation}
Hence, at fixed $\eps>0$ the larger the domain, the smaller the $L^\infty$-norm of any steady state is.
\end{remark}

\begin{remark}
Following a similar proof as below, Theorem~{\rm\ref{thm:nonexistence-local}} extends to any entropy functional $\mathcal{E}_U$, where $U$ is convex with $u$ nonincreasing and with $\lim_{r\to 0}U'(r)=-\infty$ (that is, $U$ does not model slow diffusion). We can also observe from the proof that the Lipschitz hypothesis on $W$ can be relaxed if we assume extra integrability on the critical points.
\end{remark}

\begin{proof}[Proof of Theorem {\rm\ref{thm:nonexistence-local}}]
We show that a minimizer does not exist by contradicting the mass condition. Assume there exists $\rho\in\mathcal{P}(\R^d)$ which is a $d_p$-local minimizer of $E_\eps$. The minimality condition \eqref{firstorderconditionsprelim} implies for each closed connected component $A_i$ of the support of $\rho$, there exists  $C_i\in \R$ and an open neighborhood $N_i$ of $A_i$ such that 
\begin{equation}\label{firstorderconditions}
\begin{cases}
\eps \log(\rho)+W*\rho=C_i\qquad\mbox{on $A_i$},\\
\eps \log(\rho)+W*\rho\ge C_i \qquad\mbox{on $N_i$.}
\end{cases}
\end{equation}

We first want to show that $A_1=N_1$, which implies that $A_1=\R^d$ as it is nonempty, open and closed. We suppose that there exists $x\in N_1\setminus \supp\rho$. By our boundedness assumption on $W$ away from the origin, we get
\begin{equation*}
|W*\rho(x)|\le \|W\|_{L^{\infty}(\R^d\setminus B_{d(x,\supp\rho)})},
\end{equation*}
where $d(x,\supp\rho)$ stands for the distance between the point $x$ and the set $\supp \rho$. Using the second equation in \eqref{firstorderconditions}, we get the contradiction
\begin{equation*}
 -\infty=\eps \log(0)=\eps \log(\rho(x))\ge C_i -\|W\|_{L^{\infty}(\R^d\setminus B_{d(x,\supp\rho)})}>-\infty.
\end{equation*}
Therefore, we have $A_1=N_1$ and so $A_1=\R^d$.
This implies that, for any $x\in \R^d$,
\begin{equation}\label{formulaforrho}
\rho(x)=\exp{\left(\frac{C_1-W*\rho(x)}{\eps}\right)}.
\end{equation}
From this equation, and using the monotonicity of the exponential, we have the bound
\begin{equation*}
\|\rho\|_{L^\infty(\R^d)}\le \exp{\left(\frac{C_1-\inf W}{\eps}\right)}.
\end{equation*}
Combining this with the local integrability and boundedness hypothesis on $W$, we get that
\begin{align*}
	\|W*\rho\|_{L^\infty(\R^d)} &\le \|W\chi_{B_\delta}*\rho\|_{L^\infty(\R^d)} + \|W\chi_{B_\delta^\mathrm{c}}*\rho\|_{L^\infty(\R^d)}\\
	&\le \|W\|_{L^1(B_\delta)}\|\rho\|_{L^\infty(\R^d)}+\|W\|_{L^\infty(B_\delta^\mathrm{c})}\|\rho\|_{L^1(\R^d)}.
\end{align*}
Using \eqref{formulaforrho}, the monotonicity of the exponential and the above inequality, we finally deduce
\begin{equation*}
\rho(x)\ge \exp\left(\frac{C_1-\|W*\rho\|_{L^\infty(\R^d)}}{\eps}\right)>0 \quad \mbox{for all $x\in\R^d$},
\end{equation*}
which clearly contradicts
\begin{equation*}
\ird \rho(x)\;dx=1,
\end{equation*}
and thus shows the nonexistence of local minimizers.

Now, we assume that $W$ is Lipschitz continuous and show that this implies the nonexistence of critical points. If $\rho\in\mathcal{P}_\mathrm{ac}(\R^d)$ is a critical point, then for each connected component $A_i$ of the support of $\rho$ there exists $C_i\in \R$ such that 
\begin{equation}\label{critical point ci}
\eps \log(\rho)+W*\rho=C_i\qquad\mbox{on $A_i$}.
\end{equation}
We also get that, in the sense of distributions,
\begin{equation}\label{critical point equation}
\nabla\rho=\rho\nabla W*\rho.
\end{equation}
Using that $W$ is Lipschitz, we get that $\nabla \rho\in L^1(\R^d)$. By the Sobolev inequality, this implies that $\rho\in L^{d/(d-1)}(\R^d)$. By using \eqref{critical point equation} and the Lipschitz condition again, we obtain that $\nabla\rho\in L^{d/(d-1)}$. Iterating we get that there exists $\alpha>0$ such that $\rho\in C^\alpha(\R^d)$. (See \cite{CCV,CHVY}, where similar arguments are applied to nonlinear fast diffusion.) Combining the smoothness of $\rho$ with \eqref{critical point ci}, we get that $\rho$ cannot vanish. Therefore, there exists $C\in\R$ such that
\begin{equation*}
\eps \log(\rho)+W*\rho=C \qquad\mbox{in $\R^d$},
\end{equation*}
and the contradiction follows as in the case above of local minimizers.
\end{proof}


\section{Nonexistence of minimizers}
We start by showing a general theorem with respect to the nonexistence of global minimizers.
\begin{theorem}[nonexistence of global minimizers] \label{thm:nonexistence-global}
Let $\eps>0$ and suppose that the interaction potential $W$ satisfies {\bf (H)} and is differentiable away from the origin, and suppose that $U$ is such that $u$ is nonincreasing. If
\begin{equation}\label{hyp:nonexistence}
	\limsup_{r\to\infty} \left( \frac{1}{2}\sup_{z\in B_{2r}} \left( \nabla W(z)\cdot z \right) -\eps v(r\omega_d^{1/d}) \right)<0
\end{equation}
or
\begin{equation}\label{hyp:nonexistence2}
\liminf_{r\to0} \left( \frac{1}{2}\inf_{z\in B_{2r}} \left( \nabla W(z)\cdot z \right) -\eps v(r\omega_d^{1/d}) \right)>0,
\end{equation}
then $E_\eps$ is not bounded below in the class of compactly supported bounded functions.
\end{theorem}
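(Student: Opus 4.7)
The plan is to test the energy along the one-parameter family of dilated indicator densities
\[
\rho_r = T_r \#(\omega_d^{-1}\chi_{B_1}) = r^{-d}\omega_d^{-1}\chi_{B_r}, \qquad r>0,
\]
each of which is a compactly supported, bounded probability density, and to show that under \eqref{hyp:nonexistence} the value $E_\eps(\rho_r)$ tends to $-\infty$ as $r\to\infty$, while under \eqref{hyp:nonexistence2} it tends to $-\infty$ as $r\to 0^+$. The strategy is to compute $\frac{d}{dr}E_\eps(\rho_r)$ explicitly using the scaling structure.

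For the entropic part, the paper has already recorded that $\E_U(\rho_r)=u(r\omega_d^{1/d})$, so the chain rule together with $v(s)=-su'(s)$ gives
\[
\frac{d}{dr}\E_U(\rho_r) = \omega_d^{1/d}\,u'(r\omega_d^{1/d}) = -\frac{v(r\omega_d^{1/d})}{r}.
\]
For the interaction part, writing
\[
\mathcal{W}(\rho_r)=\frac{1}{2\omega_d^2}\int_{B_1}\int_{B_1} W(r(x-y))\,dx\,dy
\]
and differentiating under the integral (justified by dominated convergence using that $W$ is $C^1$ away from the origin, the integrand's singularity is a null set, and the local integrability from {\bf (H)} controls neighbourhoods), one obtains
\[
\frac{d}{dr}\mathcal{W}(\rho_r) = \frac{1}{2r\,\omega_d^2}\int_{B_1}\int_{B_1} \nabla W(r(x-y))\cdot r(x-y)\,dx\,dy.
\]
Since $r(x-y)\in B_{2r}$ whenever $x,y\in B_1$, this integrand is bounded pointwise from above by $\sup_{z\in B_{2r}}\nabla W(z)\cdot z$ and from below by the corresponding infimum. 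Combining both computations yields
\[
\frac{d}{dr}E_\eps(\rho_r)\le \frac{1}{r}\Bigl[\tfrac{1}{2}\sup_{z\in B_{2r}}\bigl(\nabla W(z)\cdot z\bigr) - \eps\, v(r\omega_d^{1/d})\Bigr],
\]
and an analogous lower bound with $\inf$ replacing $\sup$.

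Now the two hypotheses plug in naturally. Under \eqref{hyp:nonexistence} there exist $c>0$ and $R_0>0$ such that the upper bracket above is less than $-c$ for all $r\ge R_0$, whence
\[
E_\eps(\rho_R)-E_\eps(\rho_{R_0}) \le -c\int_{R_0}^R \frac{dr}{r} = -c\log(R/R_0)\xrightarrow{R\to\infty}-\infty,
\]
contradicting any lower bound on $E_\eps$. Under \eqref{hyp:nonexistence2} the lower bracket exceeds some $c>0$ for all $r\le r_0$, and integrating the lower bound from $r$ to $r_0$ gives $E_\eps(\rho_{r_0})-E_\eps(\rho_r)\ge c\log(r_0/r)\to\infty$ as $r\to 0^+$, so again $E_\eps(\rho_r)\to -\infty$.

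The main technical obstacle is the differentiation under the integral sign, which is delicate because $\nabla W$ may be singular at the origin; it will be handled by splitting the integration domain into a small neighbourhood of the diagonal $x=y$ (controlled via local integrability of $W$ and $\nabla W\cdot z$ vanishing at $z=0$ in an averaged sense) and its complement (where $\nabla W$ is locally bounded by the differentiability-away-from-the-origin hypothesis). A minor secondary point is checking that the hypothesis $u$ nonincreasing ensures $v\ge 0$, which is consistent with (but not needed in) the sign analysis above.
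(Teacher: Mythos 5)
Your proposal is correct and follows essentially the same route as the paper: test the energy on the dilated indicators $\rho_r=r^{-d}\omega_d^{-1}\chi_{B_r}$, differentiate in $r$ using $\E_U(\rho_r)=u(r\omega_d^{1/d})$ and the change of variables in the interaction term, bound the derivative between $\frac{1}{r}\bigl(\frac{1}{2}\inf_{z\in B_{2r}}\nabla W(z)\cdot z-\eps v(r\omega_d^{1/d})\bigr)$ and the corresponding supremum, and integrate the resulting $\pm c/r$ bound to obtain logarithmic divergence of $E_\eps(\rho_r)$ as $r\to\infty$ or $r\to0^+$. The only difference is your added care about differentiating under the integral sign, which is a reasonable refinement but not a change of approach.
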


\begin{remark}\label{homogeneous non existence}
If we consider the model cases $U=U_m$ and $\mathcal{W}=\mathcal{W}_{\beta}$, then the hypotheses in \eqref{hyp:nonexistence} and \eqref{hyp:nonexistence2} translate, respectively, to
\begin{equation*}
	\lim_{r\to\infty} \left( 2^{\beta-1}r^\beta-\eps d \omega_d^{1-m} r^{(1-m)d} \right)<0 \quad \mbox{if $\beta\geq0$},
\end{equation*}
and 
\begin{equation*}
\lim_{r\to0} \left( 2^{\beta-1}r^\beta-\eps d \omega_d^{1-m} r^{(1-m)d} \right)>0 \quad \mbox{if $\beta\leq0$}.
\end{equation*}
Therefore, Theorem~\ref{thm:nonexistence-global} shows that the energy is not bounded below in the so-called aggregation-dominated \cite{calvez2017equilibria,CCHCetraro} regime 
\begin{equation*}
	\beta< d(1-m).
\end{equation*} 
For the critical case $\beta=d(1-m)$, the theorem tells us that it depends on the size of $\eps$. Indeed, the energy is not bounded below in the class of compactly supported bounded functions in the case
\begin{equation*}
	m<1 \quad \mbox{and} \quad \eps> \frac{2^{\beta-1}}{d \omega_d^{1-m}},
\end{equation*}
and in the case 
\begin{equation*}
	m>1 \quad \mbox{and} \quad \eps< \frac{2^{\beta-1}}{d \omega_d^{1-m}}. 
\end{equation*}
We show that the relationship $\beta=d(1-m)$ is sharp in the cases $m>1$ and $m=1$, and that it is not sharp in the case $m<1$; see Theorem~{\rm\ref{thm:existencem>1}}, Theorem~{\rm\ref{thm:existence-global}} and Theorem~{\rm\ref{thm:nonsharpnessfastdiffussion}}, respectively. The fair competition regime corresponding to $\beta=d(1-m)$, with $m\geq 1$, enjoys a critical mass dichotomy based on variants of the HLS inequality for $m>1$, Theorem {\rm\ref{variationofHLS}}, and the logarithmic HLS inequality for $m=1$ (Theorem {\rm\ref{thm: log HLS}}); see \cite{DoPe04,BlaDoPe06,calvez2017equilibria} for details.
\end{remark}

\begin{proof}[Proof of Theorem {\rm\ref{thm:nonexistence-global}}]
Considering $\rho_r=r^{-d}\omega_d^{-1}\chi_{ B_{r}}$, we claim that if the hypothesis in \eqref{hyp:nonexistence} is satisfied, then $\lim_{r\to\infty}E_\eps(\rho_r)=-\infty$. On the other hand, if \eqref{hyp:nonexistence2} is satisfied, then $\lim_{r\to0}E_\eps(\rho_r)=-\infty$. The proof is based on differentiating the energy under the scaling parameter $r$.
Changing variables, we obtain that
\begin{align*}
\displaystyle E_\eps(\rho_r)=\frac{1}{2\omega_d^{2}}\int_{B_1}\int_{B_1}W(r(x-y))\;dxdy+\eps r^d\omega^d U(r^{-d}\omega_d^{-1}).
\end{align*}
We remind that we denote $r^d\omega^d U(r^{-d}\omega_d^{-1})=u(r\omega_d^{1/d})$, where $u$ is the McCann scaling function. Differentiating in $r$ we obtain that
\begin{equation*}
\frac{dE_\eps(\rho_r)}{dr}=\frac{1}{r}\left(\frac{1}{2\omega_d^{2}}\int_{B_1}\int_{B_1}\nabla W(r(x-y))\cdot r(x-y)\;dxdy-\eps v(r\omega_d^{1/d})\right).
\end{equation*}
Estimating the integral we get that
\begin{equation*}
\frac{1}{r}\left(\frac{1}{2}\inf_{z\in B_{2r}}\nabla W(z)\cdot z-\eps v(r\omega_d^{1/d})\right)\le \frac{dE_\eps(\rho_r)}{dr}\le\frac{1}{r}\left(\frac{1}{2}\sup_{z\in B_{2r}}\nabla W(z)\cdot z-\eps v(r\omega_d^{1/d})\right)
\end{equation*}

If \eqref{hyp:nonexistence} is satisfied, there exists $r_1>0$ and $\delta_1>0$, such that
\begin{equation*}
\frac{dE_\eps(\rho_r)}{dr}\le -\frac{\delta_1}{r}\qquad\mbox{for any $r>r_1$}.
\end{equation*}
Integrating we get that for any $r>r_1$
\begin{equation*}
E_\eps(\rho_{r})\le \delta_1 \log(r_1/r)+ E_\eps(\rho_{r_1}).
\end{equation*}
Hence,
\begin{equation*}
\lim_{r\to\infty} E_\eps(\rho_{r})=-\infty.
\end{equation*}

If now \eqref{hyp:nonexistence2} is satisfied, there exists $r_2>0$ and $\delta_2>0$ such that
\begin{equation*}
\frac{dE_\eps(\rho_r)}{dr}\ge \frac{\delta_2}{r}\qquad\mbox{for any $r<r_2$}.
\end{equation*}
Integrating we get that for any $r<r_2$
\begin{equation*}
E_\eps(\rho_{r})\le \delta_2 \log(r/r_2)+ E_\eps(\rho_{r_2}).
\end{equation*}
Hence,
\begin{equation*}
\lim_{r\to0} E_\eps(\rho_{r})=-\infty. \qedhere
\end{equation*}
\end{proof}

For homogeneous energies, we can show that Theorem~\ref{thm:nonexistence-global} is not sharp in the fast diffusion case. 

\begin{theorem}[unboundedness of energy for fast diffusion $m<1$]\label{thm:nonsharpnessfastdiffussion}
Given $m<1$ and $0<\beta<d(1-m)/m$, then for any $\eps>0$ there exists $\rho\in\mathcal{P}(\R^d)\cap L^\infty(\R^d)$ such that
\begin{equation*}
E_\eps(\rho)=\int_{\R^d}\int_{\R^d} \frac{|x-y|^{\beta}}{\beta}\;d\rho(x)d\rho(y)+\eps \int_{\R^d}\frac{\rho^m(x)}{m-1}\;dx=-\infty.
\end{equation*}
\end{theorem}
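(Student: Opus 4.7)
The plan is to exhibit an explicit probability density $\rho\in L^\infty(\R^d)$, namely one with polynomial decay at infinity, whose entropy term is $-\infty$ while the interaction term stays finite. The intuition is that in the fast diffusion regime $m<1$ the integrand $U(\rho)=\rho^m/(m-1)$ is negative, and for slowly decaying densities $\int\rho^m$ can diverge to $+\infty$, which forces $\eps\E_m(\rho)=-\infty$. The only real content is to match the two tail behaviors so that the aggregation integral remains bounded.

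First I would select an exponent $\alpha$ satisfying
\[
d+\beta<\alpha<\frac{d}{m}.
\]
Such an $\alpha$ exists precisely because the hypothesis $\beta<d(1-m)/m$ is equivalent to $d+\beta<d/m$. Then I would take
\[
\rho(x)=c_\alpha(1+|x|)^{-\alpha},
\]
where $c_\alpha>0$ is chosen so that $\rho\in\prob$; since $\alpha>d$ this normalization constant is finite, and trivially $\rho\in L^\infty(\R^d)$ because $\rho(x)\le c_\alpha$.

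Next I would verify the two tail estimates separately. For the entropy, the condition $\alpha<d/m$ gives $m\alpha<d$, so $\rho(x)^m\sim c_\alpha^m|x|^{-m\alpha}$ fails to be integrable at infinity; hence $\int_{\R^d}\rho^m\,dx=+\infty$, which, because $m<1$, means $\mathcal{E}_m(\rho)=-\infty$. For the interaction energy, I would invoke Lemma~\ref{aux:triangleinequality} to bound $|x-y|^\beta\le \max\{1,2^{\beta-1}\}(|x|^\beta+|y|^\beta)$, and then use symmetry together with $\int\rho=1$ to write
\[
\int_{\R^d}\int_{\R^d}|x-y|^\beta\,d\rho(x)\,d\rho(y)\le 2\max\{1,2^{\beta-1}\}\int_{\R^d}|x|^\beta\rho(x)\,dx.
\]
The right-hand side is finite since the integrand behaves like $|x|^{\beta-\alpha}$ at infinity with $\beta-\alpha<-d$.

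Combining the two estimates yields $E_\eps(\rho)=\mathcal{W}_\beta(\rho)+\eps\mathcal{E}_m(\rho)=-\infty$, as claimed. There is no serious obstacle: the proof is an interpolation of tail behaviors, and the single condition one must quantitatively satisfy, namely that an $\alpha$ lies strictly between $d+\beta$ and $d/m$, is exactly the quantitative form of the hypothesis $\beta<d(1-m)/m$. The mild subtlety worth highlighting is the tightness of the range for $\alpha$: at the endpoint $\alpha=d+\beta$ the interaction integral just fails to converge, and at $\alpha=d/m$ the entropy integrand is exactly $|x|^{-d}$-borderline, so the strict inequality in both places is essential and reflects that the threshold $d(1-m)/m$ is the correct one for this family of test densities.
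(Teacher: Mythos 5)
Your proposal is correct and follows essentially the same strategy as the paper: exhibit an explicit slowly decaying density whose entropy diverges to $-\infty$ while the $\beta$-interaction stays finite via the bound $|x-y|^\beta\le\max\{1,2^{\beta-1}\}(|x|^\beta+|y|^\beta)$, with the admissible decay window being exactly the quantitative form of $\beta<d(1-m)/m$. The only difference is cosmetic: you use the continuous profile $c_\alpha(1+|x|)^{-\alpha}$ with $d+\beta<\alpha<d/m$, whereas the paper uses a piecewise-constant density on dyadic annuli with mass exponent $\gamma=\alpha-d$ satisfying the equivalent constraints $\beta<\gamma<d(1-m)/m$.
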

\begin{proof}
We construct a probability measure such that the entropy functional $\mathcal{E}_m$ is infinite but the interaction energy $\mathcal{W}_\beta$ is bounded.
Decomposing $\R^d$ into dyadic rings, we consider
\begin{equation}\label{dyadicrho}
\rho=\sum_{k=0}^\infty \frac{\rho_k}{|B_{2^{k+1}}\setminus B_{2^{k}}|}\chi_{B_{2^{k+1}}\setminus B_{2^{k}}},
\end{equation}
where
\begin{equation*}
\rho_k=\frac{2^{-k\gamma}}{\sum_{\ell=0}^\infty 2^{-\ell\gamma}}.
\end{equation*}
Now we want to pick $\gamma>0$ appropriately, such that 
\begin{equation*}
\int_{\R^d}\int_{\R^d} \frac{|x-y|^{\beta}}{\beta}\;d\rho(x)d\rho(y)<\infty
\end{equation*}
and
\begin{equation*}
\int_{\R^d}\frac{\rho^m(x)}{m-1}\;dx=-\infty.
\end{equation*}
By Lemma~\ref{aux:triangleinequality} we know $|x-y|^{\beta}\le\max\{1,2^{\beta-1}\}(|x|^\beta+|y|^\beta)$ for all $x,y \in \R^d$; hence
\begin{equation*}
\int_{\R^d}\int_{\R^d} \frac{|x-y|^{\beta}}{\beta}\;d\rho(x)d\rho(y)\le \frac{2\max\{1,2^{\beta-1}\}}{\beta}\ird |x|^\beta\;d\rho(x).
\end{equation*}
Using \eqref{dyadicrho}, the exact form for $\rho$, we get
\begin{equation*}
\ird |x|^\beta\;d\rho=\sum_{k=0}^\infty \frac{\rho_k}{|B_{2^{k+1}}\setminus B_{2^{k}}|}\int_{B_{2^{k+1}}\setminus B_{2^{k}}}|x|^\beta\;dx= C_1(d,\beta)\sum_{k=0}^\infty \rho_k 2^{k\beta}=C_1(d,\beta) \frac{\sum_{k=0}^\infty 2^{-k(\gamma-\beta)}}{\sum_{\ell=0}^\infty 2^{-\ell\gamma}},
\end{equation*}
where $C_1(d,\beta)$ is a constant that depends only on the dimension $d$ and $\beta$. In order for the right hand side to be finite, which in turn bounds the interaction energy, we need 
\begin{equation}\label{cond1}
\gamma>\beta.
\end{equation}
Turning our attention to the entropy and using the exact form of $\rho$ again, we get
\begin{equation*}
\ird \rho^m(x)\;dx=\sum_{k=0}^\infty \rho_k^m|B_{2^{k+1}}\setminus B_{2^{k}}|^{(1-m)}=C_2(d,\beta)\frac{\sum_{k=0}^\infty 2^{-k(m\gamma-d(1-m))}}{\left(\sum_{\ell=0}^\infty 2^{-\ell\gamma}\right)^m},
\end{equation*}
for some constant $C_2(d,\beta)$. For the right hand side to be infinite, we need
\begin{equation}\label{cond2}
m\gamma<d(1-m).
\end{equation}
Therefore, combining \eqref{cond1} and \eqref{cond2}, we get
\begin{equation*}
\beta<\gamma<\frac{d(1-m)}{m}.
\end{equation*}
By hypothesis, we have that $\beta<d(1-m)/m$, which implies we can take any $\gamma$ in between.
\end{proof}

\section{Existence of minimizers for $m>1$}
Again considering homogeneous energies, we now show that Theorem~\ref{thm:nonexistence-global} is sharp when $m>1$.
\begin{theorem}[existence of global minimizers for slow diffusion $m>1$]\label{thm:existencem>1}
Given $m>1$ and $0>\beta>(1-m)d$, for any $\eps>0$ the energy $E_\eps$, given by
\begin{equation*}
	E_\eps(\rho) = \begin{cases} \displaystyle \ird\ird \frac{|x-y|^\beta}{\beta} \,d\rho(x)\,d\rho(y) + \eps \ird \frac{\rho^m(x)}{m-1}\, dx & \mbox{for all $\rho \in L^m(\R^d)$},\\[5mm] +\infty & \mbox{for all $\rho \in \prob \setminus L^m(\R^d)$}, \end{cases}
\end{equation*} 
has a global minimizer.
\end{theorem}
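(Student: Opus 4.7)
My approach is the direct method of the calculus of variations, leveraging Riesz's rearrangement \eqref{eq:rearrangement-energy} to restrict to radially symmetric decreasing densities and the refined HLS inequality of Theorem~\ref{variationofHLS} for both coercivity and convergence of the interaction term.

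First, I would check coercivity and non-triviality of the infimum. Setting $m_c := 1 - \beta/d$, the hypothesis $\beta > (1-m)d$ reads $m > m_c$, so Theorem~\ref{variationofHLS} provides a $\theta \in (0,1)$ with $(1-\theta)m_c < m$ and
\begin{equation*}
E_\eps(\rho) \ge \frac{\eps}{m-1}\|\rho\|_{L^m}^m - C\|\rho\|_{L^m}^{(1-\theta)m_c} \ge -C',
\end{equation*}
so every minimizing sequence has uniformly bounded $L^m$-norm. Testing against $\rho_r = \chi_{B_r}/(\omega_d r^d)$ as in the proof of Theorem~\ref{thm:nonexistence-global} gives $E_\eps(\rho_r) = C_1 r^\beta + C_2\eps r^{d(1-m)}$ with $C_1 < 0 < C_2$; both exponents are negative and the hypothesis forces $|\beta| < d(m-1)$, so the attractive term dominates as $r \to \infty$ and $\inf E_\eps < 0$. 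Choosing a minimizing sequence $(\rho_n)_n \subset \mathcal{P}(\R^d) \cap L^m(\R^d)$, I may assume by \eqref{eq:rearrangement-energy} that each $\rho_n = \rho_n^*$ is radially decreasing; unit mass, radial monotonicity, and the uniform $L^m$-bound then yield the pointwise envelope $\rho_n(x) \le \min\{(\omega_d|x|^d)^{-1}, C|x|^{-d/m}\}$, so Helly's selection theorem produces a subsequence converging almost everywhere to a radially decreasing $\rho_\infty$ inheriting the same envelope.

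The main obstacle is to show $\int \rho_\infty = 1$, i.e.\ that no mass escapes. Radial decrease rules out translation-type escape, so any loss must take the form of spreading to infinity. I would argue by contradiction: assuming $\mu := \int \rho_\infty < 1$, pick $R_n \to \infty$ at continuity points of the radial profile of $\rho_\infty$ and split $\rho_n = \tau_n + \sigma_n$ with $\supp \tau_n \subset B_{R_n}$ and $\supp \sigma_n \subset B_{R_n}^\mathrm{c}$. Pointwise convergence combined with the decay of $\rho_\infty$ at infinity gives $\|\sigma_n\|_{L^\infty} \to 0$, hence $\mathcal{E}_m(\sigma_n) \to 0$ and, via Theorem~\ref{variationofHLS} applied to the renormalization $\sigma_n/\int \sigma_n$, $\mathcal{W}_\beta(\sigma_n) \to 0$; the cross-interaction vanishes since $|x-y|^\beta \to 0$ uniformly on $\supp \tau_n \times \supp \sigma_n$. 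Consequently $E_\eps(\rho_n) = E_\eps(\rho_\infty) + o(1)$, so $\rho_\infty$ would saturate $\inf E_\eps$ while missing $1 - \mu$ of mass. A direct scaling comparison with the dilated renormalization $\tilde\rho_\lambda(x) = \lambda^d \rho_\infty(\lambda x)/\mu$, optimizing over $\lambda > 0$ and exploiting the subcritical balance $|\beta| < d(m-1)$, produces a probability measure with strictly smaller energy than $E_\eps(\rho_\infty) = \inf E_\eps$, the desired contradiction.

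Once $\int \rho_\infty = 1$ is in hand, the envelope lies in $L^p(\R^d)$ for every $p \in (1, m)$, so dominated convergence gives $\rho_n \to \rho_\infty$ strongly in $L^p$ for such $p$, and in particular in $L^{m_c}$; the HLS inequality transfers this to $\mathcal{W}_\beta(\rho_n) \to \mathcal{W}_\beta(\rho_\infty)$, while Fatou's lemma applied to the convex density $U_m$ yields $\mathcal{E}_m(\rho_\infty) \le \liminf \mathcal{E}_m(\rho_n)$. Hence $E_\eps(\rho_\infty) \le \liminf E_\eps(\rho_n) = \inf E_\eps$, and $\rho_\infty$ is the desired global minimizer.
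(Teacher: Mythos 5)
Your overall architecture (Riesz rearrangement, radial envelope, Helly compactness, HLS for coercivity, dilation argument against mass loss) is viable and genuinely different from the paper's proof, but as written there are concrete gaps in the key step. First, the justification of the vanishing of the cross term is false: $\supp\tau_n\subset\overline{B_{R_n}}$ and $\supp\sigma_n\subset\R^d\setminus B_{R_n}$ touch along the sphere $\{|x|=R_n\}$, so $|x-y|$ can be arbitrarily small on $\supp\tau_n\times\supp\sigma_n$ and $|x-y|^\beta$ is unbounded there, not uniformly small. The cross term does vanish, but you need an extra splitting (e.g., pairs with $x\in B_T$ for fixed $T$, where $|x-y|\ge R_n-T$, versus pairs with $T<|x|<R_n$, handled by $\|\sigma_n\|_{L^\infty}\to0$ near the diagonal and by the smallness of $\rho_n$-mass in $\{T<|x|<R_n\}$ off the diagonal). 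Second, and more importantly, the claim ``$E_\eps(\rho_n)=E_\eps(\rho_\infty)+o(1)$'' is not justified at the point where you use it: your splitting only gives $E_\eps(\rho_n)=E_\eps(\tau_n)+o(1)$, and the identification of $\lim E_\eps(\tau_n)$ with $E_\eps(\rho_\infty)$ --- equivalently, passing to the limit in the singular interaction $\mathcal{W}_\beta$ --- is exactly the analytic heart of the problem (the paper devotes its Step~2, a four-region decomposition based on a uniform $|x-y|^\gamma$-moment bound, to it). You only supply the mechanism (envelope in $L^p$, $1<p<m$, hence strong $L^p$ convergence by dominated convergence, hence $\mathcal{W}_\beta(\rho_n)\to\mathcal{W}_\beta(\rho_\infty)$ via HLS) in the final paragraph, and you gate it on $\int\rho_\infty=1$, which is what the contradiction step is trying to prove; as written the logic is circular. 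In addition, ``optimizing over $\lambda$'' in the rescaling $\tilde\rho_\lambda=\lambda^d\rho_\infty(\lambda\,\cdot)/\mu$ is asserted, not verified: one must check that some $\lambda$ strictly beats $E_\eps(\rho_\infty)$, which does hold (choose $\lambda$ so that the entropy term is unchanged; the interaction coefficient is then $\mu^{-2-\beta m/(d(m-1))}>1$ because $-\beta<\min\{d,d(m-1)\}\le 2d(m-1)/m$), but this computation, and the exclusion of $\rho_\infty\equiv0$, must appear.

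The good news is that none of these gaps is fatal, because your envelope argument does not actually use mass conservation: $\rho_n\le\min\{(\omega_d|x|^d)^{-1},C|x|^{-d/m}\}=:g$ with $g\in L^p$ for all $1<p<m$, so dominated convergence gives $\rho_n\to\rho_\infty$ strongly in $L^q$ with $q=2d/(2d+\beta)\in(1,m)$, and HLS continuity of the bilinear form then yields $\mathcal{W}_\beta(\rho_n)\to\mathcal{W}_\beta(\rho_\infty)$ and, with Fatou for $\mathcal{E}_m$, the inequality $E_\eps(\rho_\infty)\le\inf E_\eps<0$ \emph{before} knowing $\int\rho_\infty=1$; this makes the splitting into $\tau_n+\sigma_n$ unnecessary and repairs the ordering. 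This is a genuinely different route from the paper, which instead normalizes the minimizing sequence by optimizing over dilations to obtain the virial identity $\mathcal{W}_\beta(\mu_n)=\eps\frac{d(m-1)}{\beta}\mathcal{E}_m(\mu_n)$, uses Theorem~\ref{variationofHLS} to get a uniform bound on $\ird\ird|x-y|^\gamma\,d\mu_n\,d\mu_n$ for some $\gamma<\beta$, and controls the near-diagonal, interface and tail regions by hand; its mass-restoration step uses the cleaner scaling $r_0^dT_{r_0}\#\mu_\infty$ with $r_0=\mu_\infty(\R^d)^{-1/d}$, for which only $\beta>-d$ and $\inf E_\eps<0$ are needed --- you may want to adopt that normalization to shorten your final contradiction.
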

\begin{proof}
We show the result by the direct method of the calculus of variations. We first observe that the functional $E_\eps(\rho)$ is well-defined and bounded below by the variant of the HLS inequality in Theorem \ref{variationofHLS}.

\medskip

\textit{Step 1.} We show that there exists $(\mu_n)_{n\in\N}\subset\mathcal{P}(\R^d)\cap L^m(\R^d)$ a minimizing sequence such that
\begin{equation}\label{propertiesofmun}
\lim_{n\to\infty}E_\eps(\mu_n)=\inf E_\eps>-\infty\quad\mbox{and}\quad\mathcal{W}_\beta (\mu_n)= \eps \frac{d(m-1)}{\beta}\mathcal{E}_m(\mu_n) \quad\forall n\in\N.
\end{equation}
Moreover, each $\mu_n$ is radially symmetric and decreasing and $E(\mu_n)<0$ for all $n\in\N$.

To construct $(\mu_n)_{n\in \N}$ we start by taking a minimizing sequence $(\rho_n)_{n\in\N}\subset\mathcal{P}(\R^d)\cap L^m(\R^d)$, i.e., a sequence satisfying $\lim_{n\to\infty}E_\eps(\rho_n)=\inf E_\eps$. Because of \eqref{eq:rearrangement-energy}, without loss of generality we can take $\rho_n$ to be radially symmetric and decreasing for all $n\in\N$. For each $n$, we construct $\mu_n$ from $\rho_n$ by optimizing over dilations. To this end, we fix $n\in\N$ and we can explicitly compute
\begin{equation*}
E_\eps(T_r\#\rho_n)=r^\beta \mathcal{W}_\beta(\rho_n)+\eps r^{d(1-m)}\mathcal{E}_m(\rho^n),
\end{equation*}
where we remind that $T_r(x)=rx$. Using that $\mathcal{W}_\beta(\rho^n)<0$ and $\mathcal{E}_m(\rho^n)>0$ combined with the hypothesis that $d(1-m)<\beta<0$, we get that there exists a unique $r_{0,n}\in (0,\infty)$ such that $E_\eps(T_r\#\rho_n)$ is strictly decreasing for $r<r_{0,n}$ and strictly increasing for $r>r_{0,n}$, and $E_\eps(T_{r_{0,n}}\#\rho_n)<0$. Furthermore, we have
\begin{equation*}
0=\left.\frac{dE_\eps(T_r\#\rho_n)}{dr}\right|_{r=r_{0,n}}=\beta (r_{0,n})^{\beta-1} \mathcal{W}_\beta(\rho_n)+\eps d(1-m)(r_{0,n})^{d(1-m)-1}\mathcal{E}_m(\rho_n).
\end{equation*}
By multiplying by $r_{0,n}$ and undoing the scaling, we get the equation
\begin{equation*}
\mathcal{W}_\beta (T_{r_{0,n}}\# \rho_n)= \eps \frac{d(m-1)}{\beta}\mathcal{E}_m(T_{r_{0,n}}\# \rho_n).
\end{equation*}
Now, since we have chosen $r_{0,n}$ to minimize over dilations we also have
\begin{equation*}
E_\eps(T_{r_{0,n}}\# \rho_n)\le E_\eps(\rho_n).
\end{equation*}
Hence, we can take $\mu_n=T_{r_{0,n}}\# \rho_n$ that satisfies \eqref{propertiesofmun} and is radially decreasing because $\rho_n$ is radially decreasing. To conclude, we can write
\begin{equation}\label{relation}
\inf E_\eps \leq E_\eps(\mu_n)=\eps \frac{d(m-1)+\beta}{\beta}\mathcal{E}_m(\mu_n)<0
\end{equation}
since $\mu_n\in\mathcal{P}(\R^d)\cap L^m(\R^d)$ and $0>\beta>(1-m)d$.

\medskip

\textit{Step 2.} By Theorem \ref{weakcompact}, we know that there exists $\mu_\infty\in \mathcal{M}_+(\R^d)$ and a subsequence $\{\mu_{n_i}\}_{i\in \N}$, such that $\mu_{n_i}\rightharpoonup\mu_\infty$ weak-* as measures and $0\le\mu_\infty(\R^d)\le 1$. Abusing notation, we do not keep track of this subsequence and we still denote by $E_\eps(\mu_\infty)$ the energy functional evaluated at $\mu_\infty$ even if we do not know yet if it is a probability measure. In this step, we show that
\begin{equation}\label{liminfformun}
\inf E_\eps=\liminf_{n\to\infty}E_\eps(\mu_n)\ge E_\eps(\mu_\infty).
\end{equation}
We recall that $E_\eps(\mu_n)=\mathcal{W}_\beta(\mu_n)+\eps \mathcal{E}_m(\mu_n)$. Exploiting the lower semicontinuity of the entropy functional we have
\begin{equation*}
\liminf_{n\to\infty}\mathcal{E}_m(\mu_n)\ge \mathcal{E}_m(\mu_\infty).
\end{equation*}
We show \eqref{liminfformun} by proving that
\begin{equation}\label{limit for the interaction}
\lim_{n\to\infty}\mathcal{W}_\beta(\mu_n)=\mathcal{W}_\beta(\mu_\infty).
\end{equation}
Using the properties of $\mu_n$ in \eqref{propertiesofmun}, we get
\begin{equation*}
\ird\ird |x-y|^\beta\;d\mu_n(x)d\mu_n(y)=\eps d \ird \mu_n^m(x)\;dx.
\end{equation*}
Using that $\beta>\max\{-d,d(1-m)\}$, we pick $\gamma=(\beta+\max\{-d,d(1-m)\})/2$. By the variation of the Hardy--Littlewood--Sobolev inequality Theorem~\ref{variationofHLS} and Young's inequality, we know that there exists a constant  $C(d,m,\beta)>0$ that depends on dimension, $\beta$ and $m$ such that
\begin{align}\label{eq:applyingHLS}
\ird\ird |x-y|^\beta\;d\mu_n(x)d\mu_n(y)&=\eps d\ird \mu_n^m(x)\;dx \nonumber \\ 
& \ge \eps C(d,m,\beta)\ird\ird|x-y|^{\gamma}\;d\mu_n(x)d\mu_n(y)-\eps d.
\end{align}
Because $\gamma<\beta$, we know that there exists $\eta(d,m,\beta,\eps)>0$, such that $|z|^\beta\le \frac{1}{2}\eps C(d,m,\beta) |z|^{\gamma}$ for every $z\in B_\eta$. Therefore, we deduce
\begin{equation*}
\int\int_{|x-y|<\eta}|x-y|^\beta\;d\mu_n(x)d\mu_n(y)\le \frac{1}{2}\eps C(d,m,\beta)\int\int_{|x-y|<\eta} |x-y|^{\gamma}\;d\mu_n(x)d\mu_n(y).
\end{equation*}
Updating \eqref{eq:applyingHLS}, we get that for every $n\in\N$
\begin{equation*}
\eta^\beta+\eps d\ge\int\int_{|x-y|>\eta}|x-y|^\beta\;d\mu_n(x)d\mu_n(y)+\eps d\ge \frac{1}{2}\eps C(d,m,\beta)\ird\ird|x-y|^{\gamma}\;d\mu_n(x)d\mu_n(y).
\end{equation*}
Hence, updating the constants we obtain that there exists $C(d,m,\beta,\eps)>0$, such that
\begin{equation}\label{boundonthegammainteraction}
\ird\ird|x-y|^{\gamma}\;d\mu_n(x)d\mu_n(y)\le C(d,m,\beta,\eps)\qquad\mbox{for all $n$.}
\end{equation}

Next we use \eqref{boundonthegammainteraction} to pass to the limit in the interaction energy \eqref{limit for the interaction}. We first realize that by lower semicontinuity we have the inequality
\begin{equation}\label{liminf for the interaction}
\liminf_{n\to\infty} \ird\ird|x-y|^\beta d\mu_n(x)d\mu_n(y)\ge \ird\ird|x-y|^\beta d\mu_\infty(x)d\mu_\infty(y).
\end{equation}
Hence, we can show \eqref{limit for the interaction} by showing the reverse inequality to \eqref{liminf for the interaction}. 

For any $\delta>0$ and $R>0$, we bound the interaction energy by
\begin{equation}\label{eq:decomposition}
\begin{split}
\ird\ird |x-y|^\beta \;d\mu_n(x)d\mu_n(y)\le &\displaystyle\int\int_{\{|x-y|<\delta\}}|x-y|^\beta \;d\mu_n(x)d\mu_n(y)\\ 	&+\int\int_{\{\delta^{-1}>|x-y|>\delta\}\cap \{|x|<R\}\cap \{|y|<R\} }|x-y|^\beta \;d\mu_n(x)d\mu_n(y)\\
	&+2\int\int_{\{\delta^{-1}>|x-y|>\delta\}\cap \{|x|>R\}}|x-y|^\beta \;d\mu_n(x)d\mu_n(y)\\
	&+\int\int_{\{\delta^{-1}<|x-y|\}}|x-y|^\beta \;d\mu_n(x)d\mu_n(y).
\end{split}
\end{equation}
Analyzing the first term on the right hand side, we get
\begin{equation}\label{eq:firsterm}
\begin{split}
\displaystyle\int\int_{\{|x-y|<\delta\}}|x-y|^\beta \;d\mu_n(x)d\mu_n(y)
&\displaystyle\le \int\int_{\{|x-y|<\delta\}}|x-y|^{\beta-\gamma} |x-y|^{\gamma}\;d\mu_n(x)d\mu_n(y)\\
&\displaystyle\le \delta ^{\beta-\gamma}\int\int_{\{|x-y|<\delta\}}|x-y|^{\gamma}\;d\mu_n(x)d\mu_n(y)\\
&\le \delta ^{\beta-\gamma}C(d,m,\beta,\eps),
\end{split}
\end{equation}
where in the last bound we have used \eqref{boundonthegammainteraction}.
The second term has no singularity, hence we can pass to the limit
\begin{equation}\label{eq:secondterm}
\limsup_{n\to\infty}\int\int_{\{\delta^{-1}>|x-y|>\delta\}\cap \{|x|<R\}\cap \{|y|<R\}} \!\!\!\!\!\!\!\!\!\!\!\!\!\!\!\!\!\!|x-y|^\beta \;d\mu_n(x)d\mu_n(y)\le\ird\ird |x-y|^\beta d\mu_\infty(x)d\mu_\infty(y). 
\end{equation}
For the third term, we get
\begin{align}\label{eq:thirdterm}
	2\ird\int_{\{x\in\R^d \mid \delta^{-1}>|x-y|>\delta,\; |x|>R\}} \!\!\!\!\!\!\!\!\!\!\!\!\!\!\!\!\!\!|x-y|^\beta \;d\mu_n(x)d\mu_n(y)
	&\le 2 \|\mu_n\|_{L^\infty(B_R^\mathrm{c})} \delta^\beta \ird \int_{\{x\in\R^d \mid \delta^{-1}>|x-y|>\delta,\; |x|>R\}} \!\!\!\!\!\!\!\!\!\!\!\!\!\!\!\!\!\! d x \,d\mu_n(y)\nonumber\\
	&\leq 2 \|\mu_n\|_{L^\infty(B_R^\mathrm{c})} \delta^\beta \omega_d \delta^{-d} \le 2\delta^{\beta-d} R^{-d},
\end{align}
where we have used that $\beta<0$ and that because $\mu_n$ has unit mass and is radially decreasing we have the inequality $\mu_n(x)\le (\omega_d |x|^d)^{-1}$. Analyzing the fourth term, we get
\begin{equation}\label{eq:fourthterm}
	\int\int_{\{\delta^{-1}<|x-y|\}}|x-y|^\beta \;d\mu_n(x)d\mu_n(y)\le \delta^{-\beta}.
\end{equation}
Putting \eqref{eq:decomposition}, \eqref{eq:firsterm}, \eqref{eq:secondterm}, \eqref{eq:thirdterm} and \eqref{eq:fourthterm} together, we obtain that, for any $\delta>0$ and $R>0$,
\begin{equation*}
	\limsup_{n\to\infty}\ird\ird |x-y|^\beta \,d\mu_n(x)d\mu_n(y)\!\le\! C\delta ^{\beta-\gamma}\!+\!\ird\ird |x-y|^\beta \,d\mu_{\infty}(x)\,d\mu_\infty(y)\!+\!2\delta^{\beta-d} R^{-d}\!+\!\delta^{-\beta}.
\end{equation*}
First taking $R\to\infty$ and then $\delta\to 0$ we recover the reverse inequality to \eqref{liminf for the interaction}. This shows the desired property \eqref{liminfformun}.

\medskip

\textit{Step 3.} In this step we show by contradiction that $\mu_\infty(\R^d)=1$. 

We first notice that $\mu_\infty(\R^d)>0$. In fact, if $\mu_\infty(\R^d)=0$, then $\mu_\infty=0$ which implies by \eqref{liminfformun}
\begin{equation*}
0>\inf E_\eps\ge E_\eps(\mu_\infty)=0
\end{equation*}
a contradiction. The fact that $\inf E_\eps<0$ follows from \eqref{relation}. 

Next, we derive a contradiction if $1>\mu_\infty(\R^d)>0$. From $\mu_\infty$ we construct $\tilde{\mu}_\infty\in\mathcal{P}(\R^d)$, such that by \eqref{liminfformun}
\begin{equation}\label{contradictionofmuinfty}
\inf E_\eps\ge E_\eps(\mu_\infty)>E_\eps(\tilde{\mu}_\infty)\ge \inf E_\eps,
\end{equation}
the desired contradiction. We construct $\tilde{\mu}_\infty$ by an appropriate scaling of $\mu_\infty$. We define
$$
\tilde{\mu}_\infty=r_0^{d} \, T_{r_0}\#\mu_\infty,
\qquad \mbox{where} \qquad
r_0=(\mu_\infty(\R^d))^{-1/d}
$$
is chosen to satisfy the mass condition.

Next we analyze the behavior of the different parts of our energy under this scaling. Looking at the entropy, we get
\begin{equation}\label{behaviorofentropy}
\mathcal{E}_m(r_0^{d}T_{r_0}\#\mu_\infty)=r_0^{md}\mathcal{E}_m(T_{r_0}\#\mu_\infty)=r_0^{md}r_0^{(1-m)d}\mathcal{E}_m(\mu_\infty)=r_0^{d}\mathcal{E}_m(\mu_\infty).
\end{equation}
For the interaction energy, unpacking the scaling we get
\begin{equation}\label{behaviouroftheinteractionenergy}
\mathcal{W}_\beta(r_0^{d} T_{r_0}\#\mu_\infty)=r_0^{2d}\mathcal{W}_\beta(T_{r_0}\#\mu_\infty)=r_0^{2d}r_0^\beta\mathcal{W}_\beta(\mu_\infty)=r_0^{2d+\beta}\mathcal{W}_\beta(\mu_\infty).
\end{equation}
By the hypothesis that $\beta>-d$ and that $r_0>1$, we get that $r_0^{2d+\beta}>r_0^d$. Using \eqref{behaviorofentropy} and \eqref{behaviouroftheinteractionenergy} by looking at the full energy we get
\begin{equation*}
E_\eps(\tilde{\mu}_\infty)=r_0^{2d+\beta}\mathcal{W}_\beta(\mu_\infty)+r_0^{d}\mathcal{E}_m(\mu_\infty)<r_0^d E_\eps(\mu_\infty)\le r_0^d\inf_{\mathcal{P}(\R^d)} E_\eps<\inf_{\mathcal{P}(\R^d)} E_\eps,
\end{equation*}
which shows the desired contradiction \eqref{contradictionofmuinfty} and finishes the proof of the Theorem.
\end{proof}

\begin{remark}
The previous is result is a particular case of \cite[Theorem II.1]{lions1984concentration}; see also \cite{CHMV} for more comments. As a main difference, our proof exploits the extra rigidity stemming from the radially decreasing rearrangement, bypassing the need of exploiting the subadditivity of the energy of minimizers at different mass. Moreover, we make use of the HLS inequality to obtain lower semicontinuity of the energy for the minimizing sequence.
\end{remark}


\section{Sharp condition on the existence of minimizers for $m=1$}
In the case of linear diffusion, we can show that Theorem~\ref{thm:nonexistence-global} is sharp for general interaction potentials. We should also note that this case is not considered in \cite{lions1984concentration}.
\begin{theorem}[sharp condition on existence of global minimizers for linear diffusion] \label{thm:existence-global}
Suppose that the entropy function is given by $U(r)= r\log r$ and that $W$ is positive and satisfying {\bf (H)}. 
If 
\begin{equation}\label{hyp: smaller than the log}
	 \limsup_{|x|\to\infty} \nabla W(x) \cdot x <2d\eps,
\end{equation}
then $E_\eps$ is not bounded below.
Alternatively, if
\begin{equation}\label{hyp: bigger than the log}
	 \liminf_{|x|\to\infty} \nabla W(x) \cdot x >2d\eps,
\end{equation}
then there exists $\rho_\infty\in\mathcal{P}(\R^d)$ such that
\begin{equation*}
E_\eps(\rho_\infty)=\inf E_\eps>-\infty.
\end{equation*}
\end{theorem}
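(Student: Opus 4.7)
The plan is to handle the two directions separately, both by testing the energy on the one-parameter family of dilated uniform measures $\rho_r = (\omega_d r^d)^{-1}\chi_{B_r}$, for which $\mathcal{E}(\rho_r) = -d\log r - \log\omega_d$ and the interaction part of the energy admits explicit logarithmic asymptotics controlled by $\nabla W(\cdot)\cdot(\cdot)$ along radial rays.

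For the nonexistence direction, I would choose $L' \in (\limsup_{|x|\to\infty}\nabla W(x)\cdot x,\;2d\eps)$ and $R > 0$ such that $\nabla W(x)\cdot x \le L'$ for $|x|\ge R$. Since $W$ is $C^1$ (hence continuous and bounded) on the compact sphere $\partial B_R$, integrating $\frac{d}{dt}W(t\hat x) = \frac{1}{t}\nabla W(t\hat x)\cdot t\hat x$ from $R$ to $|x|$ yields $W(x)\le L'\log|x| + C$ for all $|x|\ge R$. Plugging this into
\begin{equation*}
\mathcal{W}(\rho_r) = \frac{1}{2\omega_d^{2}}\int_{B_1}\int_{B_1} W(r(u-v))\,du\,dv,
\end{equation*}
splitting the domain into $\{|r(u-v)|\ge R\}$, where one uses the logarithmic bound and the trivial estimate $|r(u-v)|\le 2r$, and $\{|r(u-v)| < R\}$, which after a change of variables $z=r(u-v)$ and the $L^1_{\mathrm{loc}}$ hypothesis on $W$ contributes only $O(r^{-d})$, gives $\mathcal{W}(\rho_r)\le \tfrac{L'}{2}\log r + C'$. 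Hence $E_\eps(\rho_r) \le (L'/2 - d\eps)\log r + C''\to -\infty$ because $L'/2 - d\eps < 0$.

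For the existence direction, choose $L''\in (2d\eps,\;\liminf_{|x|\to\infty}\nabla W(x)\cdot x)$; the analogous integration along rays together with $W\ge 0$ gives $W(x) \ge L''\log|x| - C$ for all $x\ne 0$. Combined with the logarithmic HLS inequality of Theorem \ref{thm: log HLS}, this yields
\begin{equation*}
\mathcal{W}(\rho) \ge \frac{L''}{2}\int_{\R^d}\int_{\R^d}\log|x-y|\,d\rho(x)\,d\rho(y) - \frac{C}{2} \ge -\frac{L''}{2d}\mathcal{E}(\rho) - C_1,
\end{equation*}
so that $E_\eps(\rho) \ge -\gamma\mathcal{E}(\rho) - C_1$ with $\gamma := L''/(2d) - \eps > 0$. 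Together with the complementary bound $E_\eps(\rho) \ge \eps\mathcal{E}(\rho)$ coming from $W\ge 0$, taking the convex combination with weight $\lambda = \eps/(\eps+\gamma)\in(0,1)$ cancels the $\mathcal{E}(\rho)$ contribution and delivers the uniform lower bound $E_\eps(\rho) \ge -\eps C_1/(\eps+\gamma)$. Then, given any minimizing sequence $(\rho_n)$, the same two inequalities force $\mathcal{E}(\rho_n)$ to be bounded both above and below, hence $\mathcal{W}(\rho_n) = E_\eps(\rho_n) - \eps\mathcal{E}(\rho_n)$ is bounded as well. Since $W(x) \to +\infty$ as $|x|\to\infty$, Lemma \ref{lem tightness through the interaction energy} provides a translated subsequence $H_{y_i}\#\rho_{n_i}\rightharpoonup \rho_\infty$ for some $\rho_\infty\in\mathcal{P}(\R^d)$; translation invariance of $E_\eps$ and its weak-$^*$ lower semicontinuity then identify $\rho_\infty$ as a minimizer.

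The main difficulty is in securing $\inf E_\eps > -\infty$: the logarithmic HLS inequality, which is the natural tool in this critical regime, only provides a one-sided bound that produces an inequality with a wrong-signed coefficient in front of $\mathcal{E}(\rho)$. The resolution is the observation that the strict gap $L'' > 2d\eps$ is precisely the slack needed to combine this bound with the trivial estimate $\mathcal{W}\ge 0$ into a uniform lower bound with no residual entropy contribution. A minor technical point in the nonexistence direction is that $W$ may blow up at the origin, but the $L^1_{\mathrm{loc}}$ hypothesis makes the rescaled contribution of the singular region negligible.
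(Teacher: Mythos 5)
Your proof is correct and follows essentially the same strategy as the paper: the unboundedness direction is a dilation argument on $\rho_r=(\omega_d r^d)^{-1}\chi_{B_r}$ (which the paper obtains by invoking Theorem~\ref{thm:nonexistence-global}, itself proved by exactly this scaling computation), and the existence direction rests on integrating $\nabla W(x)\cdot x$ along rays to get $W(x)\ge L''\log|x|-C$ with $L''>2d\eps$, combining with the logarithmic HLS inequality and $W\ge0$ to bound $E_\eps$ below with enough slack to control $\mathcal{W}$ along a minimizing sequence, and then concluding via Lemma~\ref{lem tightness through the interaction energy} and lower semicontinuity. Your convex-combination bookkeeping is just a repackaging of the paper's split $E_\eps=\delta\mathcal{W}+\bigl((1-\delta)\mathcal{W}+\eps\mathcal{E}\bigr)$, so no substantive difference.
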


\begin{proof}
If we are under the hypothesis \eqref{hyp: smaller than the log}, then we can check that we satisfy hypothesis \eqref{hyp:nonexistence} of Theorem~\ref{thm:nonexistence-global}; hence, the energy is not bounded below. In fact, when $U(\rho)=\rho\log\rho$, we have $v(r)=d$ and is independent of $r\in(0,\infty)$ (as mentioned in Section \ref{subsec:defn-energy}). Thus, \eqref{hyp:nonexistence} is exactly
\begin{equation*}
 \limsup_{|x|\to\infty}\frac{1}{2} \nabla W(x) \cdot x -\eps d<0,
\end{equation*}
which is equivalent to \eqref{hyp: smaller than the log}. If now we are under the hypothesis \eqref{hyp: bigger than the log}, we show below in three steps by means of the logarithmic HLS inequality that the energy is bounded from below and that a minimizer exists.

\medskip

\textit{Step 1.} We show that under hypothesis \eqref{hyp: bigger than the log}, there exists $\delta\in(0,1/2)$ and $L\in \R$, such that
\begin{equation}\label{eq:Wboundedbylog}
W(x)\ge \frac{2d\eps}{1-\delta}\log|x|+L.
\end{equation}
From \eqref{hyp: bigger than the log} we know that there exists $\delta\in (0,1/2)$ such that 
\begin{equation*}
	 \liminf_{|x|\to\infty} \nabla W(x) \cdot x >\frac{2d\eps}{1-2\delta}.
\end{equation*}
Therefore, we can say that there exists $R_0$ such that
\begin{equation*}
\nabla W(x)\cdot x\ge \frac{2d\eps}{1-\delta} \qquad\mbox{for all $|x|>R_0$.}
\end{equation*}
We define $L$ by
\begin{equation*}
L=\inf_{z\in \overline{B_{R_0}}} W(z)-\frac{2d\eps}{1-\delta} \log(|z|).
\end{equation*}
Taking $x\in B_{R_0}^\mathrm{c}$, we consider the function $g(t)=W((1-t)x_0+tx)-\frac{2d\eps}{1-\delta}\log(|(1-t)x_0+tx|)$, $t\in[0,1]$, where $x_0=R_0\frac{x}{|x|}$. Next, we notice that $g$ is increasing. In fact, using that $x$ and $x_0$ are parallel and that $(1-t)x_0+tx\in B_R^\mathrm{c}$ we can see, for all $t\in[0,1]$,
\begin{equation*}
\begin{split}
\displaystyle g'(t)&\displaystyle=\nabla W((1-t)x_0+tx)\cdot(x-x_0)-\frac{2d\eps}{1-\delta}\frac{(1-t)x_0+tx}{|(1-t)x_0+tx|^2}\cdot (x-x_0)\\
&\displaystyle=\frac{|x|-R_0}{(1-t)R_0+t|x|}\left(\nabla W((1-t)x_0+tx)\cdot ((1-t)x_0+tx)- \frac{2d\eps}{1-\delta}\right)>0.
\end{split}
\end{equation*}
Therefore, using the definition of $g$, its monotonicity and the definition of $L$ we have
\begin{equation*}
W(x)-\frac{2d\eps}{1-\delta}\log|x|=g(1)\ge g(0)=W\left(R_0\frac{x}{|x|}\right)-\frac{2d\eps}{1-\delta}\log(R_0)\ge L,
\end{equation*}
which shows \eqref{eq:Wboundedbylog}.

\medskip

\textit{Step 2.} Using the behavior of $W$ \eqref{eq:Wboundedbylog} and the logarithmic HLS inequality, we show that 
\begin{equation}\label{lowerboundedness}
	\inf \left( (1-\delta)\mathcal{W}+\eps \mathcal{E} \right)>-\infty,
\end{equation}
where $\delta$ is given by Step 1.

By the logarithmic HLS inequality (Theorem~\ref{thm: log HLS}) we have that there exists $C\in \R$, such that for any $\rho\in C_\mathrm{c}^\infty(\R^d)\cap \mathcal{P}(\R^d)$
\begin{equation}\label{logHLS}
-\ird\ird \log(|x-y|)\; d\rho(x)d\rho(y)\le \frac{1}{d}\mathcal{E}(\rho)+C.
\end{equation}
By Step 1, we notice that there exists $L\in \R$ such that
\begin{equation*}
\frac{(1-\delta)}{2}\ird\ird W(|x-y|)\, d\rho(x)d\rho(y)\ge d\eps \ird\ird \log(|x-y|)\, d\rho(x)\,d\rho(y)+\frac{(1-\delta)L}{2}.
\end{equation*}
Combining this bound with \eqref{logHLS}, we get that for any $\rho\in C_\mathrm{c}^\infty(\R^d)\cap \mathcal{P}(\R^d)$
\begin{equation*}
(1-\delta)\mathcal{W}(\rho)+\eps \mathcal{E}(\rho)\ge \eps \mathcal{E}(\rho)+ d\eps \ird\ird \log(|x-y|)\, d\rho(x)\,d\rho(y)+\frac{(1-\delta)L}{2}\ge -dC+\frac{(1-\delta)L}{2}>-\infty.
\end{equation*}
We define $C_\eps=-dC+\frac{L}{4}$. By the definition of the energy, we have that given any $\mu\in \mathcal{P}(\R^d)$
\begin{equation*}
(1-\delta)\mathcal{W}(\mu)+\eps \mathcal{E}(\mu)=
\sup_{\substack{ \{\rho_n\}\subset C_\mathrm{c}^\infty(\R^d)\\ \mathrm{s.t.}\; \rho_n\rightharpoonup\mu}}
\liminf_{n\to\infty} (1-\delta)\mathcal{W}(\rho_n)+\eps \mathcal{E}(\rho_n)\ge C_\eps,
\end{equation*}
which shows \eqref{lowerboundedness}.

\medskip

\textit{Step 3.} We show now that any minimizing sequence $\{\rho_k\}_{k\in\N}\subset\mathcal{P}(\R^d)$ is tight up to translations, and hence, by lower semicontinuity there exists $\rho_\infty\in\mathcal{P}(\R^d)$ such that
\begin{equation*}
E_\eps(\rho_\infty)=\inf E_\eps >-\infty.
\end{equation*}

\medskip

By Step 2, we know that there exists $\delta>0$ and $C_\eps\in \R$, such that
\begin{equation*}
E_\eps(\rho_k)\ge \delta\mathcal{W}(\rho_k)+C_\eps.
\end{equation*}
Finally, we apply Lemma~\ref{lem tightness through the interaction energy} combined with the observation that
\begin{equation*}
\liminf_{k\to\infty} \mathcal{W}(\rho_k)\le \frac{1}{\delta}(\liminf_{k\to\infty} E_\eps(\rho_k)-C_\eps)<\infty
\end{equation*}
to get that there exists $\rho_\infty\in\mathcal{P}(\R^d)$ such that, up to translations and a subsequence,
\begin{equation*}
\rho_k\rightharpoonup\rho_\infty \quad \mbox{weakly-$^*$ as $k\to\infty$}.
\end{equation*}
Finally, the fact that $\rho_\infty$ is a minimizer follows from the lower semicontinuity of the energy.
\end{proof}

\begin{remark}\label{connectiontoKS}
Theorem \ref{thm:existence-global} can be further generalized to allow for certain singularity of the interaction potential at the origin. More precisely, one can follow the same proof to show that if the interaction potential satisfies
\begin{equation*}
	 \liminf_{|x|\to\infty} \left( W(x) -\frac{\eps}{2d} \log |x| \right) =\infty
\end{equation*}
and
$$
\inf_{x\in\R^d} \left( W(x) -\frac{\eps}{2d} \log |x| \right) > -\infty,
$$
then $\inf E_\eps>-\infty$ and minimizing sequences are tight. Further arguments are needed to show that the infimum is achieved, see \cite{BCC08,CCV} for related arguments.
\end{remark}

\begin{remark}\label{connectiontoKS}
For the energy functional 
\begin{equation*}
	\frac{1}{2}\ird\ird \log |x-y|\,d\rho(x)\,d\rho(y) + \eps \ird \rho(x) \log \rho(x)\,dx, 
\end{equation*}
corresponding to the classical Keller--Segel model \cite{DoPe04,BlaDoPe06,BCC12}, it is known that there is a critical value of the noise, $\eps_\mathrm{c}=1/(2d)$, such that the energy functional is bounded from below if and only if $\eps=\eps_\mathrm{c}$. Moreover, the optimizers of the logarithmic HLS \eqref{logHLS} are equivalent to the set of stationary states for this critical value $\eps_\mathrm{c}$. Similarly, our previous theorem shows that if $W$ is bounded from below and
$$
 \lim_{|x|\to\infty} \nabla W(x) \cdot x =L>0\,,
$$
then there also exists a critical diffusion $\eps_\mathrm{c}=L/(2d)$ separating the existence of steady states from the unboundeness from below of the free energy. Notice that these two hypotheses on $W$ allow us to show for $0<\eps<\eps_\mathrm{c}$ that the energy is bounded below and that there is confinement for minimizing sequences, respectively. 
\end{remark}


\bibliographystyle{abbrv}
\bibliography{bibliography}
\end{document}